\renewcommand*{\backrefalt}[4]{%
\ifcase #1 %
No citations.%
\or
Cited: p. #2.%
\else
Cited: pp. #2.%
\fi
}
\def\[#1\]{\begin{align}#1\end{align}}
\def\(#1\){\begin{align*}#1\end{align*}}
\title[Products of exact systems and Lorentzian CFs]{Products of exact dynamical systems and Lorentzian continued fractions}
\author[B. Barreto]{Brandon G. Barreto-Rosa}
\address{Department of Mathematics\\
George Mason University\\
4400 University Drive, MS: 3F2\\
Fairfax, Virginia 22030}
\email{bbarreto@gmu.edu}
\author[J.-P. Burelle]{Jean-Philippe Burelle}
\address{
Département de Mathématiques\\
Université de Sherbrooke\\
2500 boulevard de l'Université\\
Sherbrooke (QC) Canada, J1K 2R1}
\email{j-p.burelle@usherbrooke.ca}
\author[A. Lukyanenko]{Anton Lukyanenko}
\address{
Department of Mathematics\\
George Mason University\\
4400 University Drive, MS: 3F2\\
Fairfax, Virginia 22030}
\email{alukyane@gmu.edu}
\author[M. Richey]{Martha Richey}
\address{
Temple College\\
516 N Main St\\
Taylor, TX 76574
}
\email{martha.hartt@templejc.edu}
\subjclass[2020]{37A44, 11K50}
\keywords{Continued fractions, exact dynamical system, Minkowski space}
\date{May 28, 2025}
\theoremstyle{definition}
\newtheorem{remark}{Remark}
\newtheorem{theorem}{Theorem}
\newtheorem{lemma}{Lemma}
\newtheorem{corollary}{Corollary}
\newcommand{\id}{\operatorname{id}}
\newcommand{\jj}{\mathbbm{j}}
\newcommand{\dd}{\mathbf{d}}
\newcommand{\bR}{\mathbb{R}}
\newcommand{\N}{\mathbb{N}}
\newcommand{\R}{\mathbb{R}}
\newcommand{\Q}{\mathbb{Q}}
\newcommand{\bZ}{\mathbb{Z}}
\newcommand{\Z}{\mathbb{Z}}
\newcommand{\X}{\mathbb X}
\newcommand{\Sym}{\mathrm{Sym}}
\newcommand{\Zee}{\mathcal Z}
\newcommand{\norm}[1]{\vert #1\vert}
\newcommand{\floor}[1]{\left\lfloor #1 \right\rfloor}
\newcommand{\st}{~:~}
\newcommand{\threevector}[3]{\begin{bmatrix}
           #1 \\
           #2 \\
           #3
         \end{bmatrix}}
\begin{document}

\begin{abstract}We describe a new continued fraction system in Minkowski space $\mathbb R^{1,1}$, proving convergence, ergodicity with respect to an explicit invariant measure, and Lagrange's theorem. The proof of ergodicity leads us to the question of exactness for products of dynamical systems. Under technical assumptions, namely Renyi's condition, we show that products of exact dynamical systems are again exact, allowing us to study $\alpha$-type perturbations of the system. In addition, we describe new CF systems in $\mathbb R^{1,1}$ and $\mathbb R^{2,1}\cong \Sym_2(\R)$ that, based on experimental evidence, we conjecture to be convergent and ergodic with respect to a finite invariant measure.
\end{abstract}

\maketitle

\section{Introduction}
\label{sec:Introduction}
Regular continued fractions (CFs) describe a point $x\in [0,1)$ as a descending fraction $x=1/(a_1+1/\ldots)$ with $a_i\in \N$. Given $x\in [0,1)$, the \emph{CF algorithm} extracts the digits $a_i$ by iterating the Gauss map $T(x)=1/x - \floor{1/x}$ and setting $a_i=\floor{1/T^{i-1}(x)}$. The reassembled continued fraction then converges to $x$. 

Related algorithms adjust the notion of inversion $\iota$ (e.g., \emph{backwards} CFs), change the digit set $\Zee$ (e.g., \emph{even} CFs), or modify the domain $K$ of the Gauss map  (e.g., \emph{nearest-integer} CFs). Generalizations to higher dimensions have been studied since at least the 19th century work of Hamilton \cite{hamilton1852lii} and Hurwitz \cite{Hurwitz}. The study of generalized CFs has had a resurgence in recent years, see e.g.~Ei-Nakada-Natsui \cite{EI2025110286, Ei-Nakada-Natsui2023}, Kalle-S\'elley-Thuswaldner \cite{kalle2024finiteness}, Lukyanenko-Vandehey \cite{LukyanenkoVandeheySerendipitous, lukyanenko2022ergodicity}, Chousionis-Tyson-Urba\'nski \cite{chousionis2020conformal}, Dani-Nogueira \cite{DN2014}, and Hensley\cite{MR2351741}. 

Here, we extend the study of CFs to spaces without a natural distance function, namely products of CF systems and CFs over Lorentzian spaces. We will be interested in convergence of CF expansions, characterization of points with eventually-repeating expansions, and mixing properties. 

\subsection{Products of exact systems}
\label{subsec:ProductsOfExactSystems}Studies of mixing properties of generalized Gauss maps generally yield either ergodicity or the much stronger exactness property (see \S \ref{subsec:dynamicalpreliminaries} for the definition). Products of ergodic systems need not be ergodic and products of exact systems aren't known to be exact. However, proofs of exactness generally rely on Rokhlin's Exactness Theorem \cite{Rokhlin1961ExactEO}. We prove:

\begin{theorem}
\label{thm:RokhlinProductSystem}
    Suppose the systems $(X_i, \mu_i, T_i)$, with $i\in \{1,2, \ldots, d\}$, satisfy the conditions of Rokhlin's Exactness Theorem \ref{thm:RokhlinExactnessTheorem}. Then their product $(X,\mu,T)=(X_1\times \cdots \times  X_d, \mu_1\times  \cdots \times \mu_d , T_1\times   \cdots \times T_d)$ is exact.
\end{theorem}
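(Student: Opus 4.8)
The plan is to verify directly that the product $(X,\mu,T)$ satisfies the hypotheses of Rokhlin's Exactness Theorem \ref{thm:RokhlinExactnessTheorem}, and then invoke that theorem. Write $\xi_i$ for the generating partition of $X_i$ supplied by the hypotheses on $T_i$: thus $T_i$ restricted to each atom of $\xi_i$ is injective with full image $X_i$ mod $\mu_i$, the partition $\xi_i$ is a generator under $T_i$, and Rényi's condition holds with some constant $C_i$. The natural candidate partition for $X$ is the product $\xi=\xi_1\times\cdots\times\xi_d=\{A_1\times\cdots\times A_d: A_j\in\xi_j\}$, which is again countable. Since $T=T_1\times\cdots\times T_d$ acts coordinatewise, its restriction to an atom $A_1\times\cdots\times A_d$ is the product of the injective full-branch maps $T_j|_{A_j}$, hence injective with image $X$ mod $\mu$; this gives the first hypothesis, and the fact that $T$ preserves $\mu=\mu_1\times\cdots\times\mu_d$ is standard.

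For the generating property, the key elementary observation is that preimages and joins interact well with products of partitions: $T^{-k}\xi=(T_1^{-k}\xi_1)\times\cdots\times(T_d^{-k}\xi_d)$, and $(P_1\times Q_1)\vee(P_2\times Q_2)=(P_1\vee P_2)\times(Q_1\vee Q_2)$. Iterating, $\bigvee_{k=0}^{n-1}T^{-k}\xi=\xi_1^{(n)}\times\cdots\times\xi_d^{(n)}$ where $\xi_j^{(n)}=\bigvee_{k=0}^{n-1}T_j^{-k}\xi_j$. Letting $n\to\infty$ and using that each $\xi_j$ generates $\mathcal B_j$ mod $\mu_j$, the $\sigma$-algebra generated by $\bigcup_n\xi^{(n)}$ contains all measurable rectangles $B_1\times\cdots\times B_d$ with $B_j\in\mathcal B_j$, hence (by a standard $\pi$-$\lambda$ argument) equals the product $\sigma$-algebra $\mathcal B$ mod $\mu$; this is the second hypothesis. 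I also record that the atoms of $\xi^{(n)}$ are exactly products $A_1\times\cdots\times A_d$ of atoms $A_j\in\xi_j^{(n)}$, which makes the last hypothesis tractable.

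For Rényi's condition, on an atom $A=A_1\times\cdots\times A_d$ of $\xi^{(n)}$ the relevant inverse branch of $T^n$ is the product $\psi=\psi_1\times\cdots\times\psi_d$ of the inverse branches $\psi_j$ of $T_j^n$, so $\mu\circ\psi=(\mu_1\circ\psi_1)\times\cdots\times(\mu_d\circ\psi_d)$ and the Jacobian (Radon--Nikodym density) factors as $\frac{d(\mu\circ\psi)}{d\mu}(x_1,\dots,x_d)=\prod_{j=1}^d\frac{d(\mu_j\circ\psi_j)}{d\mu_j}(x_j)$. Taking essential suprema and infima coordinatewise, the distortion ratio of $T^n$ over $A$ is bounded by $\prod_{j=1}^d C_j$, uniformly in $n$ and in the atom, which is exactly Rényi's condition for the product with constant $C=\prod_j C_j$. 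All hypotheses of Theorem \ref{thm:RokhlinExactnessTheorem} are then met, and exactness of $(X,\mu,T)$ follows.

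The main obstacle is bookkeeping rather than depth: one must state the Rényi/bounded-distortion condition in the metric-free, purely measure-theoretic language (inverse-branch Jacobians as Radon--Nikodym derivatives) so that the factorization above is literally valid, and one must handle the ``mod $\mu$'' qualifiers in the generating-partition step, where passing from ``generated by rectangles'' to ``equals $\mathcal B$'' requires the standard monotone-class reasoning. As an alternative route that sidesteps re-checking Rényi's condition for the product, one can apply Lin's characterization of exactness ($T$ is exact iff $\|\mathcal L_T^n f-\int f\,d\mu\|_{L^1}\to 0$ for every $f\in L^1(\mu)$): the transfer operator of the product is the tensor product $\mathcal L_T=\mathcal L_{T_1}\otimes\cdots\otimes\mathcal L_{T_d}$, so the $L^1$-convergence is immediate on the dense linear span of product functions $f_1\otimes\cdots\otimes f_d$ by exactness of each factor, and extends to all of $L^1(\mu)$ since the $\mathcal L_T^n$ are $L^1$-contractions.
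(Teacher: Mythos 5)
Your main route does not establish the theorem as stated, because you have quietly replaced the hypotheses of Theorem \ref{thm:RokhlinExactnessTheorem} with a stronger package: a single countable partition on which each $T_i$ is full-branch (so every atom has iteration time $1$), which generates, and which has uniformly bounded distortion at every depth. The paper's hypotheses are weaker and more rigid in a different way: a family $\mathcal A_i$ of sets together with a \emph{set-dependent} time $n_i(A)$ at which $\mu_i(T_i^{n_i(A)}A)=1$, with the distortion bound (3) available only at that time. In the intended applications (Theorem \ref{thm:rectangles}, i.e.\ $\alpha$-CFs via Nakada--Steiner) $\mathcal A_i$ is the family of full cylinders of varying depths and the depth-one cylinders are not all full, so there is no full-branch generating partition to start from. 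Under the actual hypotheses your strategy of verifying Rokhlin's conditions for the product hits a concrete obstruction: for a rectangle $A_1\times A_2$ with $n_1(A_1)\neq n_2(A_2)$ there is no good common time. Taking $n=\max(n_1,n_2)$ does give $\mu(T^{n}(A_1\times A_2))=1$ (forward images under a measure-preserving map have non-decreasing measure), but condition (3) does not follow: the first coordinate's distortion bound is guaranteed only at time $n_1$, and nothing controls $\mu_1(T_1^{n}B_1)$ for small $B_1\subset A_1$ when $n>n_1$; restricting to rectangles with matched times would require each $A\in\mathcal A_i$ to be decomposable into members of $\mathcal A_i$ with prescribed larger times, which is not assumed. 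This is exactly why the paper does not verify the hypotheses for the product but instead reruns the \emph{proof} of Theorem \ref{thm:RokhlinExactnessTheorem}: approximate $M$ by countable covers of rectangles with sides in $\mathcal A_i$ (Lemma \ref{lemma:productmeasures}), select one rectangle that $M$ nearly fills, factor $T^{n_2}=(T_1^{n_2-n_1}\times\id)\circ(\id\times T_2^{n_2})\circ(T_1^{n_1}\times\id)$, apply each coordinate's condition (3) at its own time via Fubini, and finish using monotonicity of $n\mapsto\mu(T^nM)$. Your computation is fine verbatim in the special full-branch case (e.g.\ products of regular Gauss maps), but that does not cover the theorem's hypotheses.

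Your alternative route via Lin's criterion, by contrast, is sound (for the standard probability spaces relevant here) and genuinely different from the paper's argument: $\|\mathcal L_T^nf-\int f\,d\mu\|_{1}$ equals the $L^1$-norm of $E[f\mid T^{-n}\mathcal B]-\int f\,d\mu$, reverse martingale convergence gives Lin's equivalence, the transfer operator of a product is the tensor product of the factors' transfer operators (a monotone-class check), convergence holds on the dense span of product functions by the two-term splitting you indicate, and the $L^1$-contraction property extends it to all of $L^1$. Note that this proves strictly more than Theorem \ref{thm:RokhlinProductSystem}: it shows a finite product of exact measure-preserving systems is exact with no R\'enyi-type hypothesis at all, which would supersede the remark in \S\ref{subsec:ExactnessTheorems} that this is unknown. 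If you pursue it, make it the main argument and write it carefully (precise statement and citation of Lin's theorem, the identification $\mathcal L_{T_1\times T_2}=\mathcal L_{T_1}\otimes\mathcal L_{T_2}$); as submitted, the route you actually develop is the one with the gap.
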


As a corollary, one obtains exactness for finite products of systems including the standard Gauss map \cite{Rokhlin1961ExactEO}, its $\alpha$-variants \cite{nakadasteiner}, nearest-integer complex CFs \cite{Nakada1976}, and certain finite-range higher-dimensional systems \cite{LukyanenkoVandeheySerendipitous}.
In particular, we obtain in Theorem \ref{thm:rectangles} exactness for arbitrary shifted CFs $T_{\vec \alpha}: \left([0,1]^d-\vec \alpha \right)\rightarrow  \left([0,1]^d-\vec \alpha \right)$, for $\vec \alpha\in [0,1]^d$. Note that even ergodicity for shifted CFs over the complex numbers remains an open question, see e.g.~\cite{LukyanenkoVandeheySerendipitous, kalle2024finiteness}.

\subsection{Product-type Continued Fractions}
\label{subsec:ProductTypeCF}
Consider two CF systems (see \S \ref{sec:cfs} for a formal definition) representing points in $K_i$ as digits in $\Zee_i$ using a generalized Gauss map $T_i$. The product CF system then represents points in $K_1\times K_2$ using digits in $\Zee_1\times \Zee_2$ using $T_1\times T_2$. If each system is known to be convergent, then so is the product. If each system is known to satisfy the conditions of Rokhlin's Exactness Theorem \ref{thm:RokhlinExactnessTheorem}, then by Theorem \ref{thm:RokhlinProductSystem} the product system is exact. 

For products of \emph{regular} CFs, which we will use in studying Lorentz CFs, we are able to provide more of the standard results by adding a multiplicative structure. View $\R^d$ as a product algebra with coordinate-wise addition and multiplication. The product of coordinate-wise Gauss maps on $[0,1]$ can then be written as a single  mapping on $[0,1]^d$  as $T(x)=(T(x_i))=(\floor{1/x_i})=\floor{1/x}$. We show:

\begin{theorem}\label{thm:MainProductAlgebras} The CF algorithm on the product algebra $\R^n$ with coordinate-wise Gauss map satisfies:
\begin{enumerate}[(a)]
    \item A point $x=(x_i)\in \R^d$ has a finite expansion if and only if each $x_i$ is rational and furthermore the number of CF digits for each $x_i$ is the same.
    \item A point $x=(x_i)\in \R^d$ has an infinite expansion if and only if each coordinate $x$ is \emph{completely irrational}, i.e.~each coordinate $x_i$ of $x$ is irrational. In particular, almost every point has an infinite expansion.
    \item A point $x\in \R^d$ has an eventually-repeating expansion if and only if it is a root of a non-degenerate quadratic equation $a x^2 + bx + c=0$ with coefficients in the product algebra $\Z^d$.
    \item For a completely irrational point $x$, one has $x=1/(a_1 + 1/\ldots)$, in the sense of convergence in the Euclidean topology.
    \item The generalized Gauss map leaves invariant the probability measure $$\frac{1}{(\log 2)^{d}} \frac{ 1}{(1+x_1)\cdots(1+x_d)}dx_1 \cdots dx_d$$
    \item The CF mapping is exact, mixing of all orders, and ergodic.
\end{enumerate}
\end{theorem}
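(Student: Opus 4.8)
The plan is to prove the six items in a logical order that exploits the product structure over $\R^d$ as thoroughly as possible, reducing almost everything to the classical one-dimensional facts applied coordinate-wise. First I would set up the notation: write $x = (x_1, \ldots, x_d)$, let $T$ denote the coordinate-wise Gauss map, and observe that $T^k(x) = (T^k(x_1), \ldots, T^k(x_d))$, so the $i$-th coordinate of the CF expansion of $x$ is precisely the classical regular CF expansion of $x_i$. The subtlety is that the \emph{single} product expansion terminates only when \emph{all} coordinates simultaneously reach the stopping value, which is why the ``same length'' condition in (a) appears; this coupling is the thread running through (a), (b), and (c).

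I would then prove the items roughly as follows. For (a): a coordinate $x_i$ has a finite classical CF expansion iff $x_i \in \Q$, and the product expansion is finite iff the iteration of $T$ halts on all coordinates at once, i.e.~all $x_i$ rational \emph{and} all classical expansions have a common length; here I'd note the standard ambiguity $[a_1,\ldots,a_n] = [a_1,\ldots,a_n-1,1]$ lets one pad shorter expansions, so the real content is that every $x_i$ is rational. For (b): by (a) and the trichotomy of classical CF lengths, the expansion is infinite iff it never terminates on some coordinate, which happens iff that coordinate is irrational — actually one wants it infinite on \emph{all} coordinates to be a genuine product infinite expansion, forcing complete irrationality; ``almost every point'' then follows since $\R^d \setminus (\text{completely irrational})$ is a finite union of hyperplanes $\{x_i \in \Q\}$, which is Lebesgue-null. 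For (c) I would use the matrix/Möbius encoding: $x$ is eventually periodic iff each $x_i$ is eventually periodic iff (Lagrange, classical) each $x_i$ is a quadratic irrational \emph{over $\Q$}, and then package the $d$ separate quadratics $a_i x_i^2 + b_i x_i + c_i = 0$ into a single equation $a x^2 + b x + c = 0$ over the product ring $\Z^d$ with $a = (a_i)$ etc.; conversely a non-degenerate quadratic over $\Z^d$ splits coordinate-wise into $d$ genuine integer quadratics. ``Non-degenerate'' should be taken to mean each coordinate equation is a genuine (non-identically-zero) quadratic with no rational root, and I'd state that carefully. For (d): convergence of the reassembled fraction $1/(a_1 + 1/\cdots)$ to $x$ in the Euclidean topology on $\R^d$ is immediate since convergence in $\R^d$ is coordinate-wise and each coordinate converges by the classical theorem. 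For (e): the measure is the $d$-fold product of the Gauss measure $\frac{1}{\log 2}\frac{dx_i}{1+x_i}$, and invariance of a product measure under a product map is formal from the one-dimensional invariance (this also pins down the normalizing constant $(\log 2)^{-d}$). For (f): each factor $([0,1], \text{Gauss measure}, \text{Gauss map})$ is known to satisfy the hypotheses of Rokhlin's Exactness Theorem \ref{thm:RokhlinExactnessTheorem} (this is Rokhlin's original example), so Theorem \ref{thm:RokhlinProductSystem} applies directly to give exactness of the product; exactness implies mixing of all orders and ergodicity by standard ergodic theory.

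The genuinely new or delicate points, and where I'd spend the most care, are (a) and (c). For (a) the obstacle is bookkeeping the expansion-length ambiguity and deciding the ``right'' convention so that the clean statement ``finite iff all $x_i \in \Q$ with equal digit count'' is actually correct and not vacuous or false because of the $[\ldots,a_n-1,1]$ trick; I would fix a convention (say, always use the expansion \emph{not} ending in $1$, for $n \geq 1$) and then carefully track parities. For (c) the obstacle is making ``non-degenerate quadratic over the product algebra $\Z^d$'' precise — a quadratic over $\Z^d$ whose leading coefficient is a zero divisor degenerates to a linear equation in some coordinates, so I must rule that out in the hypothesis — and then checking both directions of the correspondence with coordinate-wise quadratic irrationality, invoking the classical Lagrange theorem in each coordinate. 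Items (b), (d), (e), (f) I expect to be short once (a) and (c) are in place, with (f) being essentially a one-line citation of Theorem \ref{thm:RokhlinProductSystem} plus the remark that the Gauss map satisfies Rokhlin's conditions.
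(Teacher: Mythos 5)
Your overall route is the same as the paper's: reduce (a)--(e) to the classical one-dimensional facts coordinate by coordinate, and get (f) from Theorem \ref{thm:RokhlinProductSystem} since the classical Gauss map satisfies Rokhlin's hypotheses; (d), (e) are the formal product statements, and (c) is the coordinate-wise Lagrange theorem, exactly as in the paper.

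The one place you go astray is the padding remark in (a). In this framework ``has a finite expansion'' means that the product Gauss map itself terminates at $0$, i.e.\ $T^n x = 0$ for some $n$, and $T$ only ever produces the canonical regular CF expansion of each coordinate; the identity $[a_1,\ldots,a_n]=[a_1,\ldots,a_n-1,1]$ is simply not available to the algorithm, so the equal-length condition is genuine content and cannot be argued away --- your claim that ``the real content is that every $x_i$ is rational'' contradicts the statement you are proving. Concretely, $x=(1/2,\,2/5)$ has both coordinates rational, with canonical lengths $1$ and $2$; the algorithm halts at $Tx=(0,1/2)\in\mathcal N$ after one digit, and the reconstructed finite fraction is $(1/2,1/2)\neq x$, so $x$ has neither a finite nor an infinite expansion (the alternatives in (a) and (b) are not exhaustive in the product). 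Even if one allowed arbitrary finite representations rather than the algorithmic one, padding changes a coordinate's length by exactly one, so ``all $x_i$ rational'' still would not suffice (canonical lengths $1$ and $5$ cannot be equalized). Once you drop that remark and argue directly --- $T^n x = 0$ if and only if every coordinate is rational with canonical expansion of length exactly $n$, which is what your ``fix the convention not ending in $1$'' sentence amounts to --- your (a) coincides with the paper's, and the rest of your outline (including the zero-divisor caveat in (c), which the paper leaves implicit in ``non-degenerate'') matches the paper's proof.
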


Let for $\vec \alpha=(\alpha_i)\in [0,1]^d$, take $K_\alpha=[0,1]^d-\vec \alpha$ and define $T_{\vec \alpha}: K \rightarrow K$ by $T_{\vec \alpha}(x)=1/x-[1/x]_{\vec \alpha}$, where $[x]_{\vec \alpha} = \floor{x+\vec \alpha}-\vec \alpha$. The resulting CF is a product of one-dimensional $\alpha_i$-CFs.

\begin{theorem}
\label{thm:rectangles}
    Let $\alpha\in [0,1]^d$. Then the mapping $T_\alpha$ is exact with respect to a finite invariant measure. Completely irrational points in $K$ have infinite, convergent expansions.
\end{theorem}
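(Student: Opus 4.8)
The plan is to realize $T_\alpha$ as the product $T_{\alpha_1}\times\cdots\times T_{\alpha_d}$ of the one-dimensional Nakada $\alpha_i$-continued fraction maps $T_{\alpha_i}\colon [0,1]-\alpha_i\to[0,1]-\alpha_i$, $T_{\alpha_i}(t)=1/t-\floor{1/t+\alpha_i}+\alpha_i$, and then to invoke Theorem \ref{thm:RokhlinProductSystem}. Concretely I would: (i) recall from the work of Nakada and Nakada--Steiner \cite{nakadasteiner} that for every $\alpha\in[0,1]$ the map $T_\alpha$ admits an explicit invariant probability measure $\mu_\alpha$, absolutely continuous with respect to Lebesgue measure with density bounded above and bounded below away from $0$, so that $\mu_\alpha$ is equivalent to Lebesgue measure; (ii) check that each factor $(X_i,\mu_i,T_i)=([0,1]-\alpha_i,\mu_{\alpha_i},T_{\alpha_i})$ satisfies the hypotheses of Rokhlin's Exactness Theorem \ref{thm:RokhlinExactnessTheorem}, namely that its branches are piecewise-monotone $C^2$ (in fact piecewise M\"obius), that the mod-$0$ partition of $X_i$ into monotonicity intervals generates, and that R\'enyi's bounded-distortion condition holds; (iii) conclude by Theorem \ref{thm:RokhlinProductSystem} that the product system $(K_\alpha,\mu_{\alpha_1}\times\cdots\times\mu_{\alpha_d},T_\alpha)$ is exact, the product measure being finite because each factor measure is a probability measure.

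For the convergence statement, let $x=(x_i)\in K_\alpha$ be completely irrational, so every $x_i$ is irrational. Then no coordinate of any orbit point $T_\alpha^n(x)$ vanishes, so the algorithm never terminates and $x$ has an infinite expansion. The $n$-th coordinate-wise convergent of $x$ is precisely the vector whose $i$-th entry is the $n$-th $\alpha_i$-continued-fraction convergent of $x_i$, and each of these converges to $x_i$ in $\mathbb R$ by the standard one-dimensional theory (via the growth of the $\alpha$-denominators $q_n$). Hence the product expansion converges to $x$ coordinate-wise, which is exactly convergence in the Euclidean topology on $K_\alpha\subset\mathbb R^d$. (The behaviour when some $x_i$ is rational — finite expansion when all coordinates terminate at the same step, divergence otherwise — can be treated exactly as in the product-algebra case of Theorem \ref{thm:MainProductAlgebras}, but is not required here.)

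The main obstacle is step (ii): one must verify that the one-dimensional $\alpha$-CF maps genuinely satisfy R\'enyi's condition for \emph{all} $\alpha\in[0,1]$, including the parameter ranges where the domain shape, the number of full and non-full branches, and the matching phenomenon are most delicate. I would handle this by passing, where necessary, to a suitable iterate of $T_{\alpha_i}$ or to the first-return map on an appropriate subinterval on which every branch is a genuine contraction with uniformly bounded distortion — the form in which Rokhlin's theorem is customarily applied — and then transporting exactness back to $T_{\alpha_i}$ itself. The remaining verifications (that the digit partition generates, and that $\mu_\alpha$ is finite and equivalent to Lebesgue measure) are routine given the explicit densities available in the literature, and the passage from a single factor to the $d$-fold product is supplied verbatim by Theorem \ref{thm:RokhlinProductSystem}.
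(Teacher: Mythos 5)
Your proposal is correct and is essentially the paper's own argument: the paper likewise views $T_{\vec\alpha}$ as a product of one-dimensional Tanaka--Ito type $\alpha_i$-CF maps, invokes \cite{nakadasteiner} (full cylinders generate the Borel $\sigma$-algebra, and R\'enyi's condition holds) to verify the hypotheses of Theorem \ref{thm:RokhlinExactnessTheorem} for each factor, concludes exactness of the product via Theorem \ref{thm:RokhlinProductSystem}, and obtains convergence coordinate-wise from \cite{Tanaka-Ito}. The only caution concerns your fallback in step (ii): it is unnecessary, since \cite{nakadasteiner} establishes R\'enyi's condition for the maps $T_{\alpha_i}$ themselves for every $\alpha_i$, and merely ``transporting exactness back to $T_{\alpha_i}$'' from an induced system would not suffice here, because Theorem \ref{thm:RokhlinProductSystem} requires the Rokhlin data (the families $\mathcal A_i$, functions $n_i$, constants $\lambda_i$) for the factor maps themselves rather than just their exactness --- which is exactly the point of that theorem, exactness of the factors alone not being known to pass to products.
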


\subsection{Lorentzian Continued Fractions}
\label{subsec:LorentzianCF}
Minkowski space $\R^{p,1}$ consists of vectors $x=(x_1,\ldots, x_{p+1})$ endowed with a quadratic form $Q_{p,1}(x_1, \ldots, x_{p+1}) = x_1^2+\ldots+x_p^2 - x_{p+1}^2$, serving as an analog of a norm squared. The natural inversion is given by $\iota_+(x) = x/Q_{p,1}(x)$. Given elements $a_i\in \R^{p,1}$ one can write $\mathbb K \{a_i\}=\iota_+(a_1 + \iota_+(\cdots))$ to define a (formal) continued fraction. To define a CF algorithm in $\R^{p,1}$, we choose a lattice $\Zee \subset \R^{p,1}$ and a fundamental domain $K$ for $\Zee$, giving a rounding map $\floor{\cdot}:\R^{p,1}\rightarrow \Zee$ characterized by $x - \floor{x}\in K$. We then have a generalized Gauss map $T_+(x) = \iota_+(x) - \floor{\iota_+(x)}$ and digits $a_i = \floor{\iota_+(T^{i-1}x)}$. We provide some general results in this setting in \S \ref{subsec:generalMinkowski}.

Specializing to to the planar case, let $K = \{(x_1,x_2) : |x_1-\frac12| + |x_2| \leq \frac12\}$ be the diamond-shaped region between $0$ and $e_1$, and let $\Zee = \frac{e_1+e_2}{2}\Z \oplus \frac{e_1-e_2}{2}\Z$. We refer to the CF system $(\R^{1,1}, \iota_+, \Zee, K)$ as the Minkowski little-diamond CF.

\begin{theorem}\label{thm:LittleDiamond} The Minkowski little-diamond CF algorithm on $\R^{1,1}$ satisfies:
\begin{enumerate}[(a)]
    \item A point $x=(x_1, x_2)\in \R^{1,1}$ has an infinite expansion if and only if  $x$ is \emph{diagonally-completely irrational}, i.e.~both $(x_1+x_2)$ and $(x_1-x_2)$ are irrational.
    \item Identifying $\R^{1,1}$ with $\R[\jj]=\{x_1+x_2\jj  \st \jj^2=1\}$, $x\in \R^{1,1}$ has an eventually-repeating expansion if and only if it is a root of a non-degenerate quadratic equation $a x^2 + bx + c=0$ with coefficients in $\Zee$.
    \item For a completely irrational point $x$, one has $x=1/(a_1 + 1/\ldots)$, in the sense of convergence in the Euclidean topology, and therefore also relative to $Q_{1,1}$.
    \item The generalized Gauss map leaves invariant the probability measure \(\frac{2}{\left(\log 2\right)^2} \frac{dx_1 dx_2}{(1+x_1)^2-x_2^2}\)
    \item The generalized Gauss map has full cylinders and satisfies Renyi's condition (that is, condition (3) of Theorem \ref{thm:RokhlinExactnessTheorem}), and therefore is exact, mixing of all orders, and ergodic.
\end{enumerate}
\end{theorem}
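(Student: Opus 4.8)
The plan is to conjugate the Minkowski little‑diamond system to an almost coordinate‑wise system via the linear change of variables $\Phi(x_1,x_2)=(p,q)$ with $p=x_1+x_2$, $q=x_1-x_2$; these are the null coordinates of $\R[\jj]$ attached to the idempotents $\tfrac{1\pm\jj}{2}$. A direct computation shows that $\Phi$ carries $\Zee$ to $\Z^2$, the diamond $K$ to the unit square $[0,1]^2$, and the inversion $\iota_+(x)=x/Q_{1,1}(x)$ to $(p,q)\mapsto(1/q,1/p)$ --- the coordinate swap reflects that $\iota_+$ is $\overline{(\,\cdot\,)^{-1}}$, not $(\,\cdot\,)^{-1}$, in $\R[\jj]$, since $Q_{1,1}(x)=x\bar x$. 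Hence $T_+$ is conjugated to $S\colon[0,1]^2\to[0,1]^2$, $S(p,q)=(G(q),G(p))$, where $G(t)=1/t-\floor{1/t}$ is the regular Gauss map --- a ``swapped'' variant of the coordinate‑wise Gauss map of Theorem \ref{thm:MainProductAlgebras}. Writing $S=\sigma\circ(G\times G)=(G\times G)\circ\sigma$ with $\sigma$ the coordinate swap gives $S^2=G^2\times G^2$, and a short induction shows that the two null coordinates of the $n$‑th convergent $c_n$ of $x$ are exactly the $n$‑th regular continued fraction convergents of $p$ and of $q$ (the digits of $p$ and of $q$ get interleaved by $\sigma$, but resorted correctly by $\iota_+$ upon reassembly).

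Granting this dictionary, the five assertions reduce to their one‑dimensional counterparts. (a) $S^n(p,q)$ is defined for all $n$ iff neither $p$ nor $q$ ever meets $0$ under iteration of $G$, i.e.\ iff both are irrational, which is diagonal‑complete‑irrationality. (c) The coordinates of $c_n$ are the classical convergents of $p$ and $q$, so $c_n\to x$ in the Euclidean topology (as $\Phi$ is a linear isomorphism), and since $Q_{1,1}$ is continuous this also gives $Q_{1,1}(c_n-x)\to 0$. (d) The product Gauss measure $\tfrac{1}{(\log 2)^2}\tfrac{dp\,dq}{(1+p)(1+q)}$ is $(G\times G)$‑invariant and, being symmetric in $p$ and $q$, also $\sigma$‑invariant, hence $S$‑invariant; pushing it forward along $\Phi^{-1}$ (Jacobian $2$, and $(1+p)(1+q)=(1+x_1)^2-x_2^2$) yields the stated probability measure. (e) Each inverse branch of $S$ has the form $(u,v)\mapsto(1/(v+k),1/(u+l))$ with $k,l\in\N$, mapping $[0,1]^2$ onto a $1$‑cylinder with Jacobian $(v+k)^{-2}(u+l)^{-2}$; iterating, the $n$‑cylinders are rectangles on which $S^{-n}$ splits as a product of two $n$‑fold Gauss inverse branches, so full cylinders and Renyi's condition for $S$ (condition (3) of Theorem \ref{thm:RokhlinExactnessTheorem}) follow from the classical statements for $G$, with the Renyi constant squared. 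The remaining hypotheses of Theorem \ref{thm:RokhlinExactnessTheorem} being immediate, $T_+$ is exact, and exactness then yields mixing of all orders and ergodicity.

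Part (b) needs slightly more care. On the dynamical side, $S^2=G^2\times G^2$ shows the $S$‑orbit of $(p,q)$ is eventually periodic iff $p$ and $q$ are eventually periodic under $G^2$, equivalently under $G$, equivalently --- by Lagrange's theorem for the regular continued fraction --- both $p$ and $q$ are quadratic irrationals; note also that an eventually repeating $T_+$‑expansion is automatically infinite, so such $x$ are diagonally‑completely irrational. On the algebraic side, reading $\Phi$ as the isomorphism $\R[\jj]\cong\R\times\R$ (under which $\Zee\cong\Z^2$ and $x^2\leftrightarrow(p^2,q^2)$), a quadratic $ax^2+bx+c=0$ with $a,b,c\in\Zee$ is precisely a pair of integer quadratics satisfied by $p$ and by $q$, and non‑degeneracy of the $\R[\jj]$‑quadratic translates to both components being genuine quadratics with positive non‑square discriminant, i.e.\ $p$ and $q$ being irrational quadratic surds. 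Matching the two descriptions gives (b).

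The main obstacle is this bookkeeping underlying (b) and (c): untangling the coordinate swap built into $\iota_+$ so that the iterates and convergents of $T_+$ are identified with honest regular continued fraction data for $p$ and $q$ (which is why one routes through $S^2$ rather than running the telescoping directly in $\R[\jj]$, where $\iota_+$ carries a conjugation), and pinning down exactly what ``non‑degenerate'' means for a quadratic over $\Zee$ so that Lagrange's theorem transfers without loss. Once the conjugacy $\Phi$ is in place, everything else is a routine transcription of one‑dimensional continued fraction facts.
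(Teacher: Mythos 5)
Your proposal is correct and takes essentially the same approach as the paper: the paper conjugates via $\Phi$ to the product of regular Gauss maps for the auxiliary inversion $\iota_c(x)=\bar x/Q(x)$ (Theorem \ref{thm:zbar}) and then transfers to $\iota_+$ using the conjugation symmetry $T_+=c\circ T_c$, $T_+^2=T_c^2$, which is precisely your swap factorization $S=\sigma\circ(G\times G)$, $S^2=G^2\times G^2$ in null coordinates. The digit bookkeeping for (a)--(c), the measure computation in (d), and the full-cylinder/Renyi route to exactness in (e) all match the paper's argument.
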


We will prove Theorem \ref{thm:LittleDiamond} in two steps. First, we observe that the inversion $\iota_c(x_1,x_2) = \frac{(x_1,-x_2)}{x_1^2 - x_2^2}$ results in a CF system with the same cylinders. Thus, it is enough to study the CF system associated with $\iota_c$. Then we show that this system is in fact isomorphic to the product of two regular CF systems, so that Theorem \ref{thm:LittleDiamond} follows from Theorem \ref{thm:MainProductAlgebras}.

Theorem \ref{thm:rectangles} likewise implies that $\alpha$-variants of the $\iota_c$ Minkowski little-diamond CF are convergent and exact for $|\alpha_1|+|\alpha_2|\leq 1$. When $\alpha_1=\alpha_2$ we furthermore get convergence and exactness for the $\iota_+$ Minkowski little-diamond CF.

\subsection{Acknowledgments}  
The second author acknowledges the support of the Natural Sciences and Engineering Research Council of Canada (NSERC), [funding reference numbers RGPIN-2020-05557]

\section{Product-Type Dynamical Systems}
\label{sec:ProductTypeDynamicalSystems}
\subsection{Preliminaries}
\label{subsec:dynamicalpreliminaries}

Throughout this paper, a dynamical system $T:X\rightarrow X$ is a measurable mapping on a measure space $(X, \Sigma, \mu)$, possibly defined away from a set of measure zero. We will assume that $\mu$ is a $T$-invariant probability measure. When we discuss continued fractions, $\mu$ will also be equivalent to the relevant Lebesgue measure $\mu_L$ in the sense that  $C^{-1}\mu \leq \mu_L \leq C \mu$ for some $C\geq 1$.

Given two dynamical systems $(T_i, X_i,\mu_i)$, the product dynamical system is the mapping $(T_1\times T_2)(x_1,x_2)=(T_1(x_1), T_2(x_2))$ on the Cartesian product $X_1\times X_2$ with the product $\sigma$-algebra and product measure $\mu_1\times \mu_2$. We will need three properties of the product measure (see, e.g., \cite{RoydenRealAnalysisEd3}, Chapter 12, and \cite{cohn2015measure}, Theorem 5.1.4):

\begin{lemma}\label{lemma:productmeasures}
Let $(X_1, \Sigma_1,\mu_1)$ and $(X_2, \Sigma_2, \mu_2)$ be two measure spaces. Then for a set $A$ in the $\sigma$-algebra generated by $\Sigma_1$ and $\Sigma_2$, one has:
\begin{enumerate}
\item If $A=E\times F$ for $E\in \Sigma_1$ and $F\in \Sigma_2$, then $(\mu_1\times \mu_2)(E\times F)=\mu_1(E)\mu_2(F)$.
\item $(\mu_1\times\mu_2)(A)=\inf \sum_{i=1}^\infty\mu_1(E_n)\mu_2(F_n)$, where the infimum is taken over all covers of $A$ by products $E_n \times F_n$ with $E_n\in \Sigma_1$ and $F_n\in \Sigma_2$,
\item $(\mu_1\times\mu_2)(A)=\int_{X_1}\mu_2(A_{x_1})\dd\mu_1(x_1)=\int_{X_2}\mu_1(A^{x_2})\dd\mu_2(x_2)$
    where $A_{x_1}=\{x_2\in X_2:(x_1,x_2)\in A\}$ and $A^{x_2}=\{x_1\in X_1:(x_1,x_2)\in A\}$.
\end{enumerate}
\end{lemma}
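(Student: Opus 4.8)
The plan is to recognize that all three statements are standard facts about the product of two (possibly merely finite, here probability) measure spaces, and to reduce them to the Carathéodory construction of $\mu_1 \times \mu_2$ as an outer measure generated by the premeasure on measurable rectangles. First I would recall the setup: let $\mathcal R$ denote the collection of \emph{measurable rectangles} $E \times F$ with $E \in \Sigma_1$, $F \in \Sigma_2$, and define $\rho(E \times F) = \mu_1(E)\,\mu_2(F)$. One checks that $\mathcal R$ is a semiring and that $\rho$ is a premeasure on it; the only nontrivial point is countable additivity of $\rho$ on $\mathcal R$, which follows from the monotone convergence theorem applied to the indicator functions $x_1 \mapsto \mu_2((E_n \times F_n)_{x_1})$. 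The product measure $\mu_1 \times \mu_2$ is then defined on the generated $\sigma$-algebra $\Sigma_1 \otimes \Sigma_2$ as the Carathéodory extension of $\rho$, and part (1) is immediate since the extension restricts to $\rho$ on $\mathcal R$ (the extension is unique here because the spaces are finite, hence $\sigma$-finite).

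For part (2), the defining formula for the Carathéodory outer measure built from the premeasure $\rho$ on the semiring $\mathcal R$ is exactly
\[
(\mu_1 \times \mu_2)^*(A) = \inf\left\{ \sum_{n=1}^\infty \rho(E_n \times F_n) \st A \subseteq \bigcup_{n=1}^\infty E_n \times F_n,\ E_n \times F_n \in \mathcal R \right\},
\]
and on $\Sigma_1 \otimes \Sigma_2$ this outer measure agrees with $\mu_1 \times \mu_2$. So part (2) is just the statement that the constructed measure \emph{is} this outer measure, restricted to the product $\sigma$-algebra; I would cite \cite{RoydenRealAnalysisEd3} (Ch.~12) and \cite{cohn2015measure} (Thm.~5.1.4) for this rather than reprove the Carathéodory machinery. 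For part (3), the key input is that for $A \in \Sigma_1 \otimes \Sigma_2$ every section $A_{x_1} = \{x_2 : (x_1,x_2) \in A\}$ lies in $\Sigma_2$, and the function $x_1 \mapsto \mu_2(A_{x_1})$ is $\Sigma_1$-measurable; both are proved by a monotone-class / Dynkin $\pi$–$\lambda$ argument starting from rectangles, using finiteness of $\mu_2$ to get closure under complements. Once measurability of the section function is known, the identity $(\mu_1 \times \mu_2)(A) = \int_{X_1} \mu_2(A_{x_1})\, \dd\mu_1(x_1)$ follows because both sides are measures in $A$ that agree on the generating semiring $\mathcal R$ (where it reads $\mu_1(E)\mu_2(F) = \int_E \mu_2(F)\,\dd\mu_1$) and are finite, hence coincide by uniqueness of extension; the symmetric formula with $A^{x_2}$ is identical with the roles of the two factors swapped.

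The main obstacle, such as it is, is purely expository: none of this is new, so the real work is citing the right theorems at the right level of generality (finite, not just $\sigma$-finite or complete, measure spaces) and making sure the three bullet points are phrased to match exactly what the cited sources prove. The one genuinely substantive lemma hiding underneath is the measurability of $x_1 \mapsto \mu_2(A_{x_1})$ for $A$ in the product $\sigma$-algebra, which is where the $\pi$–$\lambda$ theorem and the finiteness hypothesis are essential; everything else is bookkeeping around the Carathéodory extension. Since the paper only needs these as black-box tools, I would keep the proof to a sentence or two of pointers to \cite{RoydenRealAnalysisEd3} and \cite{cohn2015measure} rather than reproducing the construction in full.
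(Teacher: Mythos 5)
Your proposal is correct and matches the paper's treatment: the paper gives no proof of this lemma at all, simply citing \cite{RoydenRealAnalysisEd3} (Chapter 12) and \cite{cohn2015measure} (Theorem 5.1.4), exactly as you suggest doing. Your sketch of the Carath\'eodory extension for (1)--(2) and the section-measurability/uniqueness argument for (3) is the standard argument behind those citations, and your remark that (3) is where finiteness (or $\sigma$-finiteness) genuinely enters is the right caveat for the paper's probability-space setting.
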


The dynamical system $T$ is \emph{ergodic} or \emph{irreducible} if, for any measurable set $A\subset X$ one has that $T^{-1}A=A$ implies $\mu(A)=0$ or $\mu(X\setminus A)=0$. Products of ergodic systems need not be ergodic: the negation mapping on $\{-1,1\}$ is ergodic, but its product with itself is negation on the 4-point system $\{(\pm 1, \pm 1)\}$, which is not ergodic since $\{\pm(1,1)\}$ is an invariant set. Ergodicity of $T\times T$ is equivalent to the \emph{weak mixing} condition: for any measurable sets $A,B$, $\lim_{n\rightarrow \infty} \frac{1}{n}\sum_{i=1}^n \mu(T^{-i}A \cap B) = \mu(A)\mu(B)$. Weakly mixing systems are ergodic, and products of weakly mixing systems are again weakly mixing. Stronger yet, a dynamical system $T$ is \emph{exact} if $\mu(A)>0$ implies $\lim_{n\rightarrow \infty} \mu(T^n A) = 1$. Equivalently, tail events have measure zero: denoting by $M$ the $\sigma-$algebra of measurable sets and by $N$ the $\sigma-$algebra of null and co-null sets, one has $\bigcap_{n>0} T^{-n} M = N$. Exactness implies weak mixing (indeed, strong mixing of all orders) and therefore ergodicity. See \cite{cornfeld2012ergodic} for more details.

\subsection{Exactness Theorems}
\label{subsec:ExactnessTheorems}
It appears to be unknown whether a product of exact systems is exact. In practice, exactness is proven using the following theorem that combines the existence of expanding regions (conditions (1) and (2)) with a bound on the expansion rate (condition (3), known as Renyi's condition):
\begin{theorem}[Rokhlin's Exactness Theorem \cite{Rokhlin1961ExactEO}]
\label{thm:RokhlinExactnessTheorem}
    Let $T$ be a dynamical system on a probability space with an invariant measure $\mu$. Suppose there exist:
    \begin{enumerate}
        \item  a collection of measurable sets $\mathcal A$ such that for any measurable $B$ and any $\epsilon>0$, there is a sequence of disjoint sets $A_i\in \mathcal A$ such that  $\mu\left(B \Delta \bigsqcup_i A_i\right)<\epsilon$,
        \item a function $n: \mathcal A \rightarrow \N$ such that $\mu(T^{n(A)}A)=1$,
        \item $\lambda>0$ such that for any $A\in \mathcal A$ and measurable $B\subset A$ one has \(\mu(T^{n(A)}B)\leq \lambda \frac{\mu(B)}{\mu(A)}.\)
    \end{enumerate}
    Then $T$ is exact.
\end{theorem}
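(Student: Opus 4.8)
The plan is to establish exactness in the form recalled in \S\ref{subsec:dynamicalpreliminaries}, namely that the tail $\sigma$-algebra $\bigcap_{n>0}T^{-n}M$ contains only null and co-null sets. I would fix a tail set $E\in\bigcap_{n>0}T^{-n}M$, assume $\mu(E)>0$, and aim to conclude $\mu(E)=1$. Everything rests on one uniform density estimate: for every $A\in\mathcal A$ with $\mu(A)>0$ one has $\mu(A\cap E)\geq \frac{\mu(E)}{\lambda}\,\mu(A)$, that is, the tail set $E$ occupies a definite proportion of each generating set.

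To prove this estimate I would exploit the defining feature of a tail set. Writing $n=n(A)$, membership $E\in T^{-n}M$ supplies a measurable set $C$ with $E=T^{-n}C$, and $T$-invariance gives $\mu(C)=\mu(E)$. The elementary identity $T^{n}\bigl(A\cap T^{-n}C\bigr)=\bigl(T^{n}A\bigr)\cap C$ rewrites the forward image of $A\cap E$, and condition (2) of Theorem \ref{thm:RokhlinExactnessTheorem}, which forces $\mu(T^{n}A)=1$, upgrades this to $\mu\bigl(T^{n}(A\cap E)\bigr)=\mu(C)=\mu(E)$. Feeding $A\cap E\subset A$ into the Renyi-type bound (3) then gives $\mu(E)=\mu\bigl(T^{n}(A\cap E)\bigr)\leq \lambda\,\mu(A\cap E)/\mu(A)$, which rearranges to the claimed estimate. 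I expect this to be the main obstacle, since it is the one genuine idea in the proof: condition (3) alone only bounds forward images \emph{from above}, and it is the interaction with the full-measure condition (2)—guaranteeing that the image recovers all of $C$—that reverses it into a \emph{lower} bound on the density of $E$ inside $A$.

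With the density estimate in hand, the conclusion is a short approximation argument, whose one subtlety is to apply condition (1) to the \emph{complement} $E^{c}=X\setminus E$ rather than to $E$ itself. Given $\epsilon>0$, I would pick disjoint sets $A_i\in\mathcal A$ with $\mu\bigl(E^{c}\,\Delta\,\bigsqcup_i A_i\bigr)<\epsilon$, discarding any null $A_i$ so the estimate applies to each. Writing $U=\bigsqcup_i A_i$, the approximation yields both $\mu(U\cap E)\leq\mu(E^{c}\,\Delta\,U)<\epsilon$ and $\mu(U)>\mu(E^{c})-\epsilon$, and summing the density estimate over the disjoint pieces gives
\(
\epsilon>\mu(U\cap E)=\sum_i\mu(A_i\cap E)\geq\frac{\mu(E)}{\lambda}\sum_i\mu(A_i)=\frac{\mu(E)}{\lambda}\,\mu(U)>\frac{\mu(E)}{\lambda}\bigl(\mu(E^{c})-\epsilon\bigr).
\)
Letting $\epsilon\to0$ forces $\mu(E)\,\mu(E^{c})\leq0$, so $\mu(E^{c})=0$ because $\mu(E)>0$ and $\lambda>0$; hence $\mu(E)=1$, completing the proof. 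What remains is routine bookkeeping modulo null sets—checking the set identity, reading $\mu(T^{n}A)=1$ as $T^{n}A=X$ up to measure zero, and excluding null members of $\mathcal A$—none of which poses real difficulty.
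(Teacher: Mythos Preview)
Your proof is correct but follows a genuinely different route from the paper's. The paper argues directly with the forward-image characterization of exactness: given any positive-measure $B$ it approximates $B$ by a disjoint union $\bigsqcup_i A_i$ from $\mathcal A$, locates by a pigeonhole-style contradiction a single piece $A_i$ with $\mu(A_i\setminus B)\le\frac{\epsilon}{\lambda}\mu(A_i)$, and then uses conditions (2)--(3) to push this piece forward and conclude $\mu(T^{n(A_i)}B)\ge 1-\epsilon$. You instead work with the tail-$\sigma$-algebra characterization and isolate a clean uniform density lemma---every tail set $E$ satisfies $\mu(A\cap E)\ge\frac{\mu(E)}{\lambda}\mu(A)$ for all $A\in\mathcal A$---after which approximating $E^{c}$ and summing over the pieces forces $\mu(E)\mu(E^{c})=0$. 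Your argument is more modular and arguably cleaner, since the density estimate is reusable and the final step avoids the search for a single ``good'' $A_i$; the paper's approach, on the other hand, is slightly more self-contained in that it does not invoke the equivalence of the two definitions of exactness and produces the conclusion $\mu(T^{n}B)\to 1$ for an arbitrary positive-measure $B$ rather than restricting to tail sets.
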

\begin{proof}
    Let $B$ be a measurable set satisfying $\mu(B)>0$ and $\lambda>\epsilon>0$. Because $\mu$ is $T$-invariant, $\mu(T^n B)$ forms a non-decreasing sequence, so it suffices to find $n$ such that $\mu(T^n B)>(1-\epsilon)$. 
    Choose  a disjoint collection of sets $A_i \in \mathcal A$ such that $A=\bigsqcup_i A_i$ satisfies $\mu(A \Delta B) < \frac{\epsilon}{4\lambda} \mu(B) <  \frac{1}{2} \mu(B)$. In particular, this gives $\mu(A\setminus B)<\frac{\epsilon}{4\lambda}\mu(B)$ and $\mu(A)>\frac{1}{2}\mu(B)$. 
    Suppose by way of contradiction that for each $i$ one has $\mu(A_i \setminus B)>\frac{\epsilon}{\lambda}  \mu(A_i)$. This would give:
    \begin{align*}\frac{\epsilon}{4\lambda} \mu(B) &> \mu(A\setminus B) = \sum_i \mu(A_i\setminus B) >
    \sum_i \frac{\epsilon}{\lambda} \mu(A_i) =  \frac{\epsilon}{\lambda} \mu(A) \geq  \frac{\epsilon}{2\lambda}\mu(B),
    \end{align*}
    a contradiction. Thus, for some $A_i$ one has $\mu(A_i\setminus B) \le \frac{\epsilon}{\lambda}  \mu(A_i)$. For this $A_i$, we have $\mu(T^{n(A_i)}(A_i\setminus B)) \leq \lambda\frac{\mu(A_i\setminus B)}{\mu(A_i)}<\epsilon$, and so  $\mu(T^{n(A_i)}B)\geq \mu(T^{n(A_i)}(A_i \cap B))\geq (1-\epsilon)$, as desired.
\end{proof}

\begin{remark}
When working with continued fraction systems (see Section \ref{sec:cfs} below), one generally takes $\mathcal A$ to be the collection of full cylinders $C_s$ and $n(C_s)=|s|$, see e.g. \cite{Rokhlin1961ExactEO, Nakada1976, nakadasteiner}.
\end{remark}

We now prove Theorem \ref{thm:RokhlinProductSystem}, which asserts the exactness of a product of systems satisfying the assumptions of Theorem \ref{thm:RokhlinExactnessTheorem}. The proof is a  variation on the proof of Theorem  \ref{thm:RokhlinExactnessTheorem}.

\begin{proof}[Proof of Theorem \ref{thm:RokhlinProductSystem}]
For simplicity, we work with $m=2$. We assume that the systems satisfy the conditions of Rokhlin's Exactness Theorem with respect to the families $\mathcal A_i$, functions $n_i: \mathcal A_i \rightarrow \N$, and constants $\lambda_i>0$.

Let $M\subset X_1 \times X_2$ be a measurable set with $\mu(M)>0$. We would like to show that $\mu(T^nM)\rightarrow 1$. The measure $\mu$ is invariant under $T_1\times T_2$, so that $\mu (T^nM)$ forms an increasing sequence, and we need only show that for any given $\epsilon>0$ there exists $n$ such that $\mu(T^nM)>(1-\epsilon)$.

By Lemma \ref{lemma:productmeasures}, we have $\mu(M)=\inf \sum_j \mu_1(M^j_1)\mu_2(M^j_2)$, where the infimum is taken over all countable covers of $M$ by the rectangular sets $M^j_1\times M^j_2$. By hypothesis (1) of Theorem \ref{thm:RokhlinExactnessTheorem}, we may approximate measurable sets in $X_i$ by elements of $\mathcal A_i$, writing instead $\mu(M)=\inf \sum_j \mu_1(A^j_1)\mu_2(A^j_2)$ where the infimum is taken over all countable covers of $M$ by the rectangular sets $A^j_1\times A^j_2$, with $A^j_1\in \mathcal A_1$ and $A^j_2\in \mathcal A_2$.  Choose such a cover satisfying 
\(0\leq \sum_j \mu_1(A^j_1)\mu_2(A^j_2) - \mu(M) \leq  \epsilon \frac{ \mu(M)}{\lambda_1 \lambda_2}.\)
Suppose, by way of contraction, that for all $j$ we have
\(\frac{\mu(M\cap A^j_1\times A^j_2)}{\mu(A^j_1\times A^j_2)}< 1-\epsilon \frac{1}{\lambda_1 \lambda_2}.\)
Then by subadditivity of measures we have
\begin{align*}
    \mu(M)&\leq \sum_j \mu(M \cap A^j_1\times A^j_2) 
    \leq \sum_j \left(1-\epsilon \frac{1}{\lambda_1 \lambda_2}\right) \mu(A^j_1\times A^j_2)
    \\&= \left(1-\epsilon \frac{1}{\lambda_1 \lambda_2}\right)\sum_j  \mu(A^j_1\times A^j_2)
    \leq \left(1-\epsilon \frac{1}{\lambda_1 \lambda_2}\right)\left(\mu(M)+ \epsilon \frac{ \mu(M)}{\lambda_1 \lambda_2}\right)
    \\&\leq \mu(M)+\epsilon \frac{ \mu(M)}{\lambda_1 \lambda_2} - \epsilon \frac{\mu(M)}{\lambda_1 \lambda_2} - \epsilon \frac{1}{\lambda_1 \lambda_2}\epsilon \frac{ \mu(M)}{\lambda_1 \lambda_2}
\end{align*}
This reduces to $0 \leq  - \frac{ \epsilon^2\mu(M)}{(\lambda_1 \lambda_2)^2}$, a contradiction.
Thus, there exists $j$ such that we have $\frac{\mu(M\cap A^j_1\times A^j_2)}{\mu(A^j_1\times A^j_2)} \geq 1 - \epsilon \frac{1}{\lambda_1\lambda_2}$.  Letting $A = A^j_1 \times A^j_2$, we conclude that $\mu(A\setminus M) \leq \epsilon \frac{\mu(A)}{\lambda_1\lambda_1}$.

Write $n_i = n(A_i)$ and assume, without loss of generality, that $n_1<n_2$. Observe that 
$T^{n_2} = (T_1^{n_2-n_1}\times \id) \circ (\id\times T_2^{n_2})\circ (T_1^{n_1}\times\id)$. We apply each of these in sequence, noting first that by definition of $n_1$ we have that $\mu_1(T^{n_1}(A_1))=1$. 
By Fubini's Theorem and Rokhlin's condition (3), the product mapping $(T_1^{n_1}\times\id)$ increases relative volume by a factor of at most $\lambda_1$ over $A_1$, so that 
\(\frac{\mu( (T_1^{n_1}\times\id)(A\setminus M))}{\mu((T_1^{n_1}\times\id)(A))}=\frac{\mu( (T_1^{n_1}\times\id) (A\setminus M))}{1\cdot \mu_2(A_2)} \leq \lambda_1 \frac{ \mu(A\setminus M)}{\mu_1(A_1)\mu_2(A_2)}\)
and likewise 
\(\frac{\mu((\id\times T_2^{n_2}) (T_1^{n_1}\times\id) (A\setminus M))}{1\cdot 1} \leq \lambda_1\lambda_2 \frac{\mu(A\setminus M)}{\mu(A)} \leq  \epsilon.\)
Thus, we already have $\mu((T_1^{n_1}\times T_2^{n_2})(A\cap M)) \geq (1-\epsilon)$. Because $\mu_1\times \mu_2$ is invariant under $(T_1\times \id)$ we obtain that $\mu((T_1^{n_2-n_1}\times \id) \circ (\id\times T_2^{n_2})\circ (T_1^{n_1}\times\id)(A\cap M))=\mu(T^{n_2}(A\cap M))\geq (1-\epsilon)$ and thus $\mu(T^{n_2}(M))>(1-\epsilon)$.
\end{proof}

\section{Preliminaries on continued fractions}
\label{sec:cfs}
\subsection{CF framework}
\label{subsec:CFframework}
We will work with generalized continued fractions in a framework analogous to that of Iwasawa CFs \cite{lukyanenko2022ergodicity}. Namely, we will fix the \emph{CF algorithm data}:
\begin{enumerate}
    \item a space $\X = \R^d$ equipped with the standard topology, the standard smooth structure, and a measure equivalent to Lebesgue measure, denoted by $\mu$,
    \item a \emph{null space} $\mathcal N \subset \X$ such that $\mu(\mathcal N)=0$ and $0\in \mathcal N$,
    \item a smooth order-two \emph{inversion} mapping $\iota: \X\setminus \mathcal N \rightarrow \X \setminus \mathcal N$,
    \item a lattice $\Zee\subset \R^d$ serving as the \emph{digits}
    of the CF, which we will interpret as mappings $a: \R^d\rightarrow \R^d$,
    \item a fundamental domain $K\subset \X$ for $\Zee$ containing $0$, i.e., $X=\bigsqcup_{a\in \Zee}(a+K)$ and furthermore $K$ is contained in the closure of its interior and $\mu(\partial K)=0$,
    \item a rounding function $[\cdot]_K: \X \rightarrow \Zee$ associated to $K$, defined by the relation $[x]_K^{-1}(x)\in K$.
\end{enumerate}

For a fixed set of CF algorithm data, we then define:
\begin{enumerate}
    \item A finite continued fraction is an expression of the form $a_1 \iota a_2 \iota \cdots \iota a_n(0)$ (where we suppress composition notation),
    \item An infinite CF is a (possibly formal) limit $\lim_{n\rightarrow \infty} a_1 \iota a_2 \iota \cdots \iota a_n(0)$,
    \item The generalized Gauss mapping $T: K \rightarrow K$ is given by $T(x)=[\iota x]_K^{-1}(\iota x)$,
    \item The CF digits of $x\in K$ are the sequence $a_i = a_i(x) = [\iota T^{i-1} x]_K$. This sequence will be finite if $T^n x \in \mathcal{N}$ for some $n$, but may also be infinite.
    \item For a fixed sequence of digits $s=(b_1, \ldots, b_n)$ having length $|s|=n$, the \emph{cylinder set} $C_s$ is the set of points \(C_s = \{x \in K : a_i(x)=b_i \text{ for each }1\leq i\leq n\}.\)
    \item A cylinder $C_s$ is \emph{full} if $\mu(T^{|s|}(C_s))=1$. 
\end{enumerate}

A continued fraction system is \emph{convergent} if for any $x\in K$ the CF digits $a_i=a_i(x)$ satisfy $\lim_{n\rightarrow \infty} a_1 \iota a_2 \iota \cdots \iota a_n(0) = x$.  It \emph{has full cylinders} if all cylinders with positive measure are full. It is \emph{finite range} if the collection of sets $\{T^{|s|}(C_s)\}$, considered up to measure zero, is finite. It is \emph{ergodic} or \emph{exact} if $T$ is ergodic (resp., exact) with respect to \emph{some} invariant probability measure on $K$ that is equivalent to Lebesgue measure.

\subsection{Regular CFs}
\label{subsec:RegularCFs}
    Regular CFs are given by the data $\X=\R$, $\mathcal N=\{0\}$, $\Zee=\mathbb Z$, $\iota(x)=1/x$, $K=[0,1)$. Rational numbers are represented by finite CFs, while irrational numbers have convergent representations. Lagrange's theorem (see e.g. \cite{MR2351741}) states that eventually-repeating CF representations correspond to roots of non-degenerate quadratic polynomials over $\Z$. The regular CF system has full cylinders and satisfies Renyi's condition. Ergodicity goes back to work of Gauss, and was first formalized by Renyi \cite{MR97374} and extended to exactness by Rokhlin \cite{Rokhlin1961ExactEO}.

\subsection{Tanaka-Ito CFs}
\label{subsec:TanakaItoCF}
    For $\alpha\in [0,1)$, the Tanaka-Ito \cite{Tanaka-Ito} $\alpha$-CFs are given by the data $\X=\R$, $\mathcal N=\{0\}, \Zee=\mathbb Z, \iota(x)=1/x$, $K=[\alpha-1,\alpha)$. For $\alpha\notin\{0,1\}$, the system has cylinders that are not full; when $\alpha$ is a root of a linear or quadratic equation, the collection $\{T^{|s|}C_s\}$ of normalized cylinders is finite. Convergence for $\alpha$-CFs was shown in \cite{Tanaka-Ito}, and it was only recently that the conjectured ergodicity and exactness were confirmed by Nakada-Steiner \cite{nakadasteiner}, using Rokhlin's Exactness Theorem.

\section{Product-type continued fractions}
\label{sec:productCFs}
 
In this section, we study product-type continued fractions on the product algebra $\R^d$. That is, we endow $\R^d$ with the coordinate-wise operations
\begin{align*}(x_1, \ldots, x_d) + (x'_1, \ldots, x'_d) &= (x_1+x'_1, \ldots, x_n+x'_d)\\
(x_1, \ldots, x_d) \times (x'_1, \ldots, x'_d) &= (x_1x'_1, \ldots, x_d x'_d)\end{align*}
Note that the multiplicative identity is $(1,\ldots, 1)$ and that an element $(x_1, \ldots, x_d)$ is invertible if and only if each $x_i$ is non-zero. 

Taking the null set $\mathcal N$ to be the non-invertible points (including the origin $0=(0,\ldots, 0)$), we have a natural inversion $\iota:\R^d\setminus \mathcal N \rightarrow \R^d \setminus \mathcal N$ given by $\iota(x)=x^{-1}$. In coordinates one has $\iota(x_1, \ldots, x_d) = (1/x_1, \ldots, 1/x_d)$.

\subsection{Products of Regular CFs}
\label{subsec:ProductRegularCFs}
Let $\Zee=\Z^d$, $K=[0,1)^d$ and $\iota$ and $\mathcal N$ as above. The associated system is a product of regular CFs in $\R$. We will write $T(x) = (T_\R(x_1), \ldots, T_\R(x_d))$ where $T_\R$ is the Gauss map $T_\R(x) = 1/x - \floor{1/x}$.

\begin{proof}[Proof of Theorem \ref{thm:MainProductAlgebras}]
We prove the assertions of the theorem in an altered order.
\begin{enumerate}
\item[(a)] Given a finite sequence $a_1, \ldots, a_n$ of digits in $\Z^d$, the associated continued fraction $x=\iota a_1 \cdots \iota a_n(0)$ produces an element of $\Q^d$ such that each coordinate has a CF expansion of length $n$. Applying the map $T$ to the resulting point removes the digits $a_i$ one at a time, so that $T^nx=0$. Conversely, given a starting point $x$, the map $T$ can be iterated as long as $T^n_\R(x_i)\neq 0$ for each $i$. If all $x_i$ are rational and have the same number of CF digits, say $n$, then $T^n(x)=0$ and we can reconstruct $x$ from the digits.

\item[(b)] A point $x=(x_i)$ has an infinite expansion if and only if $T^n(x)\notin \mathcal N$ for all $n$. Equivalently, $T^n _\R(x_i)\neq 0$ for all $i$ and $n$, i.e.~each $x_i$ is irrational.

\item[(d)] For completely irrational points $x\in \R^d$, we obtain a convergent CF in each coordinate, so the sequence in $\R^d$ is likewise convergent.

\item[(c)] Observe that in the product algebra $\R^d$ one has that $e_ie_j = \delta_{i,j}e_i$, and consider a quadratic equation with coefficients in $\Q^d$: 
\begin{align*}
   ax^2 + bx+c =
   \threevector{a_1}{\vdots}{a_d}
   \threevector{x_1}{\vdots}{x_d}
   \threevector{x_1}{\vdots}{x_d} + 
   \threevector{b_1}{\vdots}{b_d}
   \threevector{x_1}{\vdots}{x_d}
   + 
   \threevector{c_1}{\vdots}{c_d}
   =
   \threevector{a_1x_1^2 + b_1x_1+c_1}{\vdots}{a_dx_d^2 + b_dx_d+c_d}.
\end{align*}
We see that a quadratic equation $ax^2 + bx+c=0$ in $\R^d$ is simply a collection of equations $a_ix_i^2 + b_ix_i+c_i=0$. Thus, the classical Lagrange's theorem gives a joint Lagrange theorem in $\R^d$.

\item[(e)] The given measure is the product of the invariant measures for $T_\R$ along each coordinate.

\item[(f)] Exactness follows from Theorem \ref{thm:RokhlinProductSystem}. \qedhere
\end{enumerate}
\end{proof}

\subsection{Rectangular CFs}
\label{subsec:RectangularCF}
Let $K_\alpha = [0,1)^d-\alpha$, for $\alpha\in [0,1]^d$, be a rectangular fundamental domain for $\Z^d$ that is parallel to the axes of $\R^d$. The resulting CF splits as a product of $\alpha$-CFs in each coordinate. We thus have:

\begin{theorem}
    Let $K\subset [-1,1]^d$ be any rectangular fundamental domain for $\Z^d$ that is parallel to the axes of $\R^d$. Then the associated CF on the product algebra $\R^d$ with lattice $\Z^d$ and inversion $\iota(x)=x^{-1}$ is convergent and exact.
\begin{proof}
    Convergence follows from the fact that $\alpha$-CFs are convergent \cite{Tanaka-Ito}. Exactness of $\alpha$-CFs was proven in \cite{nakadasteiner} as a consequence of two results: the full cylinders of $T_\alpha$ generate the Borel $\sigma$-algebra of $[\alpha-1, \alpha)$ (\cite[Proposition 1]{nakadasteiner}), and Renyi's condition holds (\cite[Proposition 2]{nakadasteiner}). Together, these imply the conditions of Rokhlin's Exactness Theorem \ref{thm:RokhlinExactnessTheorem}, so the product system is exact by Theorem \ref{thm:RokhlinProductSystem}.
\end{proof}
\end{theorem}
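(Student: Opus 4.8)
The plan is to reduce the statement entirely to results already available. The key observation is structural: any rectangular fundamental domain $K$ for $\Z^d$ that is parallel to the coordinate axes must be (up to a set of measure zero, and up to translation by a lattice vector in each coordinate) of the form $K = \prod_{i=1}^d [\alpha_i - 1, \alpha_i)$ for some $\alpha_i \in [0,1]$. Indeed, the projection of $K$ to the $i$-th coordinate must be an interval that is a fundamental domain for $\Z$ acting on $\R$, hence a half-open interval of length $1$; writing that interval as $[\alpha_i - 1, \alpha_i)$ identifies the parameters. The first step of the proof is therefore to record this normalization carefully, noting that the containment $K \subset [-1,1]^d$ forces each $\alpha_i \in [0,1]$ so that the one-dimensional factors are genuine Tanaka--Ito $\alpha_i$-CFs.

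Once this is done, the second step is to observe that the inversion $\iota(x) = x^{-1}$ on the product algebra is exactly the coordinate-wise inversion $(x_1,\dots,x_d)\mapsto(1/x_1,\dots,1/x_d)$, and that the rounding function $[\cdot]_K$ associated to the rectangular $K$ is the coordinate-wise rounding into $[\alpha_i-1,\alpha_i)$. Hence the generalized Gauss map $T = [\iota\,\cdot\,]_K^{-1}(\iota\,\cdot\,)$ factors as $T = T_{\alpha_1} \times \cdots \times T_{\alpha_d}$, where each $T_{\alpha_i}$ is the Tanaka--Ito $\alpha_i$-Gauss map. This identifies our system, as a dynamical system, with the product of $d$ one-dimensional Tanaka--Ito systems.

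The third step is to invoke the inputs: convergence of each $\alpha_i$-CF is the theorem of Tanaka--Ito \cite{Tanaka-Ito}, and since convergence of a CF in $\R^d$ with the product algebra means coordinate-wise convergence of the reassembled fractions (the inversion and digit maps act coordinate-wise), convergence of each factor gives convergence of the product. For exactness, Nakada--Steiner \cite{nakadasteiner} establish that each $T_{\alpha_i}$ satisfies the hypotheses of Rokhlin's Exactness Theorem \ref{thm:RokhlinExactnessTheorem}: the full cylinders generate the Borel $\sigma$-algebra (their Proposition 1, giving hypothesis (1) with $\mathcal A_i$ the full cylinders and $n_i(C_s) = |s|$, since full cylinders are mapped onto the whole space), and Renyi's condition holds (their Proposition 2, giving hypothesis (3)). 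Hypothesis (2) is immediate since $C_s$ full means $\mu_i(T_{\alpha_i}^{|s|} C_s) = 1$. Then Theorem \ref{thm:RokhlinProductSystem} applies directly to conclude that $T = T_{\alpha_1}\times\cdots\times T_{\alpha_d}$ is exact with respect to the product of the invariant measures, which is equivalent to Lebesgue measure on $K$ since each factor measure is.

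I expect the only real subtlety to be the first step — confirming that "rectangular fundamental domain parallel to the axes" really does force the product structure $\prod_i [\alpha_i-1,\alpha_i)$ rather than something with a more exotic shape, and handling the harmless measure-zero and lattice-translation ambiguities in the definition of $K$ so that it literally matches a product of Tanaka--Ito domains. Everything after that is bookkeeping: the factorization of $T$, the coordinate-wise nature of convergence, and the citation of \cite{Tanaka-Ito} and \cite{nakadasteiner} feeding into Theorem \ref{thm:RokhlinProductSystem}. (One could alternatively phrase the whole argument as a special case of Theorem \ref{thm:rectangles}, but it is cleaner to spell out the reduction to one-dimensional $\alpha$-CFs.)
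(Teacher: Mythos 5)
Your proposal is correct and follows essentially the same route as the paper: identify the rectangular domain coordinate-wise with Tanaka--Ito intervals $[\alpha_i-1,\alpha_i)$, so the Gauss map factors as a product of $\alpha_i$-CF maps, then cite \cite{Tanaka-Ito} for convergence and \cite{nakadasteiner} for the Rokhlin hypotheses and conclude via Theorem \ref{thm:RokhlinProductSystem}. Your extra care with the normalization of $K$ is a harmless elaboration of what the paper leaves implicit.
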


\section{Minkowski Continued Fractions}
\label{sec:MinkowskiCF}
\subsection{General Results}\label{subsec:generalMinkowski}
Let $\X=\R^{p,q}$, i.e.~the set $\R^p\times \R^q$ with the indefinite inner product $\langle (x_1,x_2), (y_1,y_2)\rangle_{p,q}=x_1\cdot y_1 -  x_2 \cdot y_2$ and quadratic form $Q(x)=\langle x,x\rangle_{p,q}$. Take $\mathcal N = \{x: Q(x)=0\}$. Fixing $\mathcal O\in O(p,q)$, write $\iota(x)=\frac{\mathcal O(x)}{Q(x)}$ for $x\not\in \mathcal N$. The mapping $\iota$ is an inversion whenever $\mathcal O^2=\id$. We will primarily be interested in CFs in $\R^{1,1}$, which we study from a different perspective, but we take this opportunity to record some standard results in this general setting. 

\begin{lemma} The mapping $\iota$ satisfies the identities $Q(\iota x)=Q(x)^{-1}$ and $Q(x-y)=Q(x)Q(y)Q(\iota(x)-\iota(y))$.
\begin{proof}
Since $\mathcal O \in O(p,q)$, we have that $Q(\mathcal O(x)) = Q(x)$ for any $x\in \R^{p,q}$, and likewise inner products are preserved by $\mathcal O$. We may therefore simply consider the case $\mathcal O=\id$.

The first identity holds because $Q(cx)=c^2Q(x)$ for $c\in \R$, so that $Q(\iota(x))=Q(x/Q(x))=Q(x)^{-2}Q(x)=Q(x)^{-1}$. For the second identity, one computes:
\begin{align*}
Q&(\iota(x)-\iota(y))=\left \langle \frac{x}{Q(x)}-\frac{y}{Q(y)}, \frac{x}{Q(x)}-\frac{y}{Q(y)}\right \rangle_{p,q}\\
&=Q(x)^{-2}Q(y)^{-2}\langle Q(y)x-Q(x) y, Q(y)x-Q(x) y\rangle_{p,q}\\
&=Q(x)^{-1}Q(y)^{-1}(Q(y)+Q(x)-\langle x,y\rangle_{p,q} -\langle y,x 
\rangle_{p,q})\\
&=Q(x)^{-1}Q(y)^{-1}(\langle x-y,x-y\rangle_{p,q})=Q(x)^{-1}Q(y)^{-1}Q(x-y).\qedhere
\end{align*}
\end{proof}
\end{lemma}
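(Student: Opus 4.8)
The plan is to verify both identities by direct computation, after first reducing to the case $\mathcal O = \id$. Since $\mathcal O \in O(p,q)$ preserves both $Q$ and the bilinear form $\langle\cdot,\cdot\rangle_{p,q}$, and since $\mathcal O$ is linear while $Q(x)$ and $Q(y)$ are scalars, one has $\iota(x) = \mathcal O\!\left(\tfrac{x}{Q(x)}\right)$ and hence $\iota(x) - \iota(y) = \mathcal O\!\left(\tfrac{x}{Q(x)} - \tfrac{y}{Q(y)}\right)$; applying $Q$ removes the $\mathcal O$, so neither $Q(\iota x)$ nor $Q(\iota(x) - \iota(y))$ depends on the choice of $\mathcal O$. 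It therefore suffices to treat $\iota(x) = x/Q(x)$. For the first identity, the homogeneity $Q(cx) = c^2 Q(x)$ for $c \in \R$ then gives $Q(\iota x) = Q(x)^{-2} Q(x) = Q(x)^{-1}$.

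For the second identity I would clear denominators, writing $\iota(x) - \iota(y) = Q(x)^{-1}Q(y)^{-1}\bigl(Q(y)x - Q(x)y\bigr)$, so that $Q(\iota(x) - \iota(y)) = Q(x)^{-2}Q(y)^{-2}\,Q\bigl(Q(y)x - Q(x)y\bigr)$. Expanding the quadratic form via bilinearity and symmetry, $Q\bigl(Q(y)x - Q(x)y\bigr) = Q(y)^2 Q(x) - 2Q(x)Q(y)\langle x,y\rangle_{p,q} + Q(x)^2 Q(y) = Q(x)Q(y)\bigl(Q(x) + Q(y) - 2\langle x,y\rangle_{p,q}\bigr) = Q(x)Q(y)\,Q(x-y)$. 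Substituting back yields $Q(\iota(x) - \iota(y)) = Q(x)^{-1}Q(y)^{-1}Q(x-y)$, which rearranges to $Q(x-y) = Q(x)Q(y)Q(\iota(x) - \iota(y))$.

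There is no genuine obstacle here; the only thing to watch is the bookkeeping of scalar factors, in particular that $Q$ is homogeneous of degree $2$, so that pulling $Q(x)^{-1}$ out of an argument of $Q$ produces a factor $Q(x)^{-2}$. The reduction to $\mathcal O = \id$ should be stated up front to avoid dragging $\mathcal O$ through the algebra. Note also that no definiteness or nondegeneracy of $Q$ is used, only $Q(x), Q(y) \neq 0$, which holds since $x, y \notin \mathcal N$.
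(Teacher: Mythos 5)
Your proposal is correct and follows essentially the same route as the paper: reduce to $\mathcal O=\id$ using that $\mathcal O$ preserves $Q$ and the bilinear form, get the first identity from degree-$2$ homogeneity, and get the second by clearing denominators and expanding $Q\bigl(Q(y)x-Q(x)y\bigr)$ bilinearly. The only cosmetic difference is that you write the cross term as $2\langle x,y\rangle_{p,q}$ where the paper keeps $\langle x,y\rangle_{p,q}+\langle y,x\rangle_{p,q}$; these agree by symmetry.
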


The mapping $\iota$ is thus ``expanding'' relative to $|Q|$ on $B_Q=\{x: |Q(x)|<1\}$:
\begin{corollary}
For $x,y\in B_Q$ one has $|Q(\iota x-\iota y)|> |Q(x-y)|$.
\end{corollary}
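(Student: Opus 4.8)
The corollary follows directly from the identity $Q(x-y) = Q(x)Q(y)Q(\iota(x)-\iota(y))$ proved in the preceding lemma, so the plan is simply to rearrange this identity and bound the relevant factor.

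First I would rewrite the lemma's second identity as
\[
Q(\iota x - \iota y) = \frac{Q(x-y)}{Q(x)Q(y)},
\]
which is legitimate since $x,y \in B_Q$ means in particular $Q(x)\neq 0$ and $Q(y)\neq 0$ (points with $Q=0$ lie in $\mathcal N$, where $\iota$ is undefined). Taking absolute values gives
\[
|Q(\iota x - \iota y)| = \frac{|Q(x-y)|}{|Q(x)|\,|Q(y)|}.
\]
Now the hypothesis $x,y\in B_Q$ says exactly that $|Q(x)|<1$ and $|Q(y)|<1$, hence $|Q(x)|\,|Q(y)| < 1$, and dividing by a positive quantity strictly less than $1$ strictly increases the (positive) numerator $|Q(x-y)|$. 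This yields $|Q(\iota x - \iota y)| > |Q(x-y)|$, as claimed.

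The only point requiring a word of care is the implicit assumption $Q(x-y)\neq 0$: if $Q(x-y)=0$ the inequality would read $0>0$. One should therefore either restrict to the case $Q(x-y)\neq 0$ (the degenerate case being vacuous or excluded from the intended application, where $x\neq y$ are typically taken with $x-y\notin\mathcal N$), or state the conclusion as $|Q(\iota x-\iota y)| \ge |Q(x-y)|$ with equality only in degenerate situations. Since $\mathcal O\in O(p,q)$ preserves $Q$, the reduction to $\mathcal O=\id$ made in the lemma carries over verbatim, so no separate argument is needed for general $\iota$. I do not anticipate any genuine obstacle here: the statement is an immediate corollary, and the entire content is the factorization identity already established.
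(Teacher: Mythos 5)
Your proof is correct and is exactly the argument the paper intends: the corollary is stated as an immediate consequence of the lemma's identity $Q(x-y)=Q(x)Q(y)Q(\iota x-\iota y)$ together with $|Q(x)|,|Q(y)|<1$, with no separate proof given. Your observation about the degenerate case $Q(x-y)=0$ (where the strict inequality would fail since both sides vanish) is a fair caveat that applies equally to the paper's own statement, and your handling of it is appropriate.
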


The expanding region for the Euclidean metric is more restricted. Denote the Euclidean metric as $d_E$ and let $B_E=\{(x_1,x_2)\in\R^{p,q}:|x_1|+|x_2|<1\}\setminus \mathcal N$. Then we have:

\begin{lemma}
\label{lemma:EuclideanExpandingRegion} The singular values of $\iota$ are bounded below by 1 on $B_E$. Consequently, $\iota$ is an expanding mapping on subsets of $B_E$ with convex image.
\begin{proof}
Observe first that it suffices to study the singular values: if the singular values are bounded below by 1, then $\norm{\dd \iota(\vec v)}\geq \norm{\vec v}$. Thus, if two points in $\iota(B_E)$ are connected by a straight line $\gamma$, then the length of $\iota^{-1}(\gamma)$ is bounded above by the length of $\gamma$.

Observe next that it suffices to compute singular values for $\R^{2,2}$: given a point $x\in \R^{p,q}$ and a vector $v\in \R^{p,q}$ based at $p$, we may apply an element of $O(p)\times O(q)\subset O(p,q)\cap O(p+q)$ to rotate $x$ and $v$ into $\R^{2,2}\subset \R^{p,q}$. This transformation preserves membership in $B_E$, preserves the Euclidean inner product, and commutes with $\iota$, so the singular values for $\iota$ in $\R^{2,2}$ and $\R^{p,q}$ will agree.

Indeed, by symmetry it is sufficient to compute $(\dd\iota)^T \dd\iota$ at a point $(l,0,r,0)\in \R^{2,2}$ with $l,r> 0$, and we may assume $\mathcal O=\id$. One obtains:
    \(\dd\iota\vert_{(l,0,r,0)}=\frac{1}{(l^2-r^2)^2}\begin{bmatrix}
        -l^2-r^2 & 0 & 2lr & 0\\
        0 & l^2-r^2 & 0 & 0\\
        -2lr & 0 & l^2+r^2 & 0\\
        0 & 0 & 0 & l^2-r^2
    \end{bmatrix}\)
    
    Calculating the eigenvalues of $(\dd\iota)^T \dd\iota\vert_{(l,0,r,0)}$ gives singular values of $\dd\iota\vert_{(l,0,r,0)}$:
    \((l+r)^{-2},\hspace{.25in}(l-r)^{-2},\hspace{.25in}|l^2-r^2|^{-1},\)
    corresponding to the vectors $e_1+e_3$, $-e_1+e_3$, and both $e_2$ and $e_4$, respectively.
    
    Given our normalization to $l,r>0$, we have that $l+r>|l-r|>|(l+r)(l-r)|^{1/2}$, so that $(l+r)^{-2}$ is the smallest singular value. The condition $(l+r)^{-2}>1$ corresponds exactly to membership in $B_E$, as desired.
\end{proof}
\end{lemma}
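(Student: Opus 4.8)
I would split the lemma into its two parts: the pointwise claim that every singular value of $\dd\iota$ is at least $1$ on $B_E$, and the geometric consequence that $\iota$ does not contract Euclidean distances on subsets of $B_E$ with convex image. The plan is to prove the first and deduce the second. For the deduction, the key point is that $\iota$ is an involution, so differentiating $\iota\circ\iota=\id$ gives $\dd\iota|_{\iota(x)}=(\dd\iota|_x)^{-1}$; thus ``all singular values $\ge 1$ on $B_E$'' is equivalent to ``all singular values $\le 1$ on $\iota(B_E)$'', i.e.\ $\iota$ does not lengthen any curve contained in $\iota(B_E)$. Then, given $S\subseteq B_E$ with $\iota(S)$ convex and $p,q\in S$, the straight segment $\gamma$ from $\iota(p)$ to $\iota(q)$ lies in $\iota(S)\subseteq\iota(B_E)$, so $\iota(\gamma)$ is a curve in $S$ from $p$ to $q$ of length at most $|\iota(p)-\iota(q)|$; since a chord is never longer than a joining curve, $|p-q|\le|\iota(p)-\iota(q)|$, strictly when $p\ne q$ because on the open set $B_E$ the singular values are strictly larger than $1$.

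To prove the singular value bound I would first cut down the dimension. The subgroup $O(p)\times O(q)$ lies in $O(p,q)$, so it preserves $Q$ and (after reducing to $\mathcal O=\id$) commutes with $\iota$; it also lies in $O(p+q)$, so it is a Euclidean isometry and in particular preserves $B_E$. Hence for $g$ in this group $\dd\iota|_{gx}=g\,\dd\iota|_x\,g^{-1}$ is a Euclidean-orthogonal conjugate of $\dd\iota|_x$ and has the same singular values. Rotating the $\R^p$- and $\R^q$-components of a basepoint independently, every orbit meets the standard copy of $\R^{2,2}\subset\R^{p,q}$ at a point $(l,0,r,0)$, and the analogous symmetries within $\R^{2,2}$ let me take $l,r\ge 0$. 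So it suffices to compute the singular values of $\dd\iota$ at $(l,0,r,0)\in\R^{2,2}$, with $\mathcal O=\id$ and $l,r\ge 0$.

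This computation is short. Differentiating $\iota(x)=x/Q(x)$ gives $\dd\iota|_x=Q(x)^{-1}\id-2\,Q(x)^{-2}\,x\,(Jx)^{\top}$, with $J=\mathrm{diag}(I_p,-I_q)$ representing $\langle\cdot,\cdot\rangle_{p,q}$; at $(l,0,r,0)$ this is a scalar multiple of $\id$ minus a rank-one term supported on the $e_1,e_3$ coordinates, so $(\dd\iota)^{\top}\dd\iota$ is block diagonal, a $2\times2$ block in the $e_1,e_3$-plane and $(l^2-r^2)^{-2}$ on each of $e_2,e_4$. Reading off the eigenvalues, the singular values of $\dd\iota|_{(l,0,r,0)}$ are $(l+r)^{-2}$ on $e_1+e_3$, $(l-r)^{-2}$ on $e_1-e_3$, and $|l^2-r^2|^{-1}$ on $e_2$ and $e_4$. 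For $l,r\ge 0$ not both zero one has $l+r\ge|l-r|$, hence $(l+r)^2\ge(l-r)^2$ and $(l+r)^2\ge|l-r|(l+r)=|l^2-r^2|$, so $(l+r)^{-2}$ is the smallest of the three; therefore all singular values are $\ge 1$ exactly when $l+r\le 1$, while $(l,0,r,0)\in B_E$ precisely when $l+r<1$. This gives the claimed bound on $B_E$, strictly, with the degenerate cases $l=0$ or $r=0$ immediate since all singular values then equal $\max(l,r)^{-2}$.

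The step requiring the most care is the symmetry reduction: one has to check simultaneously that $O(p)\times O(q)$ preserves $Q$, preserves the Euclidean metric and $B_E$, and commutes with $\iota$ --- the last forcing the normalization $\mathcal O=\id$, since a general involution $\mathcal O$ need not be Euclidean-orthogonal (in the cases relevant to this paper it is, and then it contributes only an orthogonal factor that drops out of $(\dd\iota)^{\top}\dd\iota$) --- and that rotations and reflections really do carry an arbitrary basepoint to the normal form $(l,0,r,0)$, which is all that the singular values depend on. Everything downstream --- the derivative of $x\mapsto x/Q(x)$, a $2\times2$ eigenvalue computation, and the comparison of three reciprocals of squares --- is routine.
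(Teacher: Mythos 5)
Your proof is correct and follows essentially the same route as the paper's: the reduction to $\R^{2,2}$ via $O(p)\times O(q)$, the singular-value computation at $(l,0,r,0)$ yielding $(l+r)^{-2}$, $(l-r)^{-2}$, $|l^2-r^2|^{-1}$, and the chord-versus-curve argument on a convex image. Your added care about the degenerate points $l=0$ or $r=0$ and about when the normalization $\mathcal O=\id$ is legitimate (it needs $\mathcal O$ to be Euclidean-orthogonal, which holds in the cases the paper actually uses) tidies up details the paper's proof glosses over, but does not change the argument.
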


\subsection{Two-dimensional systems}
\label{subsec:2DMinkowskiSystems}
We now focus on the case of $\X=\R^{1,1}$ with coordinates $x=(x_1,x_2)$. We give $\R^{1,1}$ additional algebraic structure by identifying it with the commutative algebra $\R[\jj]=\{x_1+x_2\jj :\jj^2=1\}$. Borrowing notation from complex numbers, we write $\overline{x_1+x_2\jj }=x_1-x_2\jj $,  $Q(x)=\Re(x\overline x)$, and $\iota_+(x)=x/Q(x)=x/(x\overline x) = 1/\overline{x}$. Observe that $\R[\jj]$ is, in fact, isomorphic to the product algebra $\R^2$ (see Section \ref{sec:productCFs}):
\begin{lemma}
\label{lemma:changeOfCoordinates}
The mapping $\Phi: \R^2 \rightarrow \R[\jj]$ given by $\Phi(a,b)=a \frac{1+\jj }{2} + b \frac{1-\jj }{2}$ is an algebra isomorphism with inverse $\Phi^{-1}(x_1,x_2)=(x_1+x_2,x_1-x_2)$. Furthermore, one has $\overline{\Phi(a,b)}=\Phi(b,a)$, $\Re(\Phi(a,b))=\frac{a+b}{2}$, and $Q(\Phi(a,b))=ab$. 
\begin{proof}The mapping $\Phi$ is linear and non-degenerate, so it is a vector space isomorphism between $\R^2$ and $\R[\jj]$. The multiplicative structure of $\R^2$ is characterized by the property that $(1,0)^2=(1,0)$, $(0,1)^2=(0,1)$ and $(1,0)*(0,1)=(0,0)$. One checks that $\Phi(1,0)^2 = \left(\frac{1+\jj }{2}\right)^2 = (\frac{2+2\jj}{4})=\Phi(1,0)$. Likewise, $\Phi(0,1)^2 = \Phi(0,1)$ and $\Phi(1,0)*\Phi(0,1)=\Phi(0,0)$. Thus, $\Phi$ is a bijective algebra homomorphism, hence an isomorphism. The identities $\overline{\Phi(a,b)}=\Phi(b,a)$ and $\Re(\Phi(a,b))=\frac{a+b}{2}$ are immediate from the definition of $\Phi$, and give $Q(\Phi(a,b))=\Re (\Phi(a,b)\overline{\Phi(a,b)})=\Re(\Phi(ab, ab))=ab$.
\end{proof}

\end{lemma}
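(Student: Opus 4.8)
The plan is to verify the three defining properties of an algebra isomorphism — $\R$-linearity, bijectivity, and multiplicativity including preservation of the unit — and then read off the three extra identities, all by direct computation, since every step here is elementary.

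First I would record the explicit formula $\Phi(a,b) = \frac{a+b}{2} + \frac{a-b}{2}\jj$, obtained by expanding the definition. From this, $\R$-linearity of $\Phi$ is immediate, and solving $x_1 = \frac{a+b}{2}$, $x_2 = \frac{a-b}{2}$ for $(a,b)$ yields $a = x_1 + x_2$ and $b = x_1 - x_2$; this exhibits the two-sided inverse $\Phi^{-1}(x_1,x_2) = (x_1+x_2,\, x_1-x_2)$ and shows $\Phi$ is a linear isomorphism of vector spaces.

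Next I would check multiplicativity. Since multiplication in each of $\R^2$ and $\R[\jj]$ is $\R$-bilinear and $\Phi$ is linear, the identity $\Phi(uv) = \Phi(u)\Phi(v)$ need only be verified on a basis of $\R^2$, say $e_1 = (1,0)$ and $e_2 = (0,1)$. Writing $e_+ = \frac{1+\jj}{2} = \Phi(e_1)$ and $e_- = \frac{1-\jj}{2} = \Phi(e_2)$, the relation $\jj^2 = 1$ gives $e_+^2 = e_+$, $e_-^2 = e_-$, and $e_+ e_- = 0$, matching $\Phi(e_1 e_1) = \Phi(e_1)$, $\Phi(e_2 e_2) = \Phi(e_2)$, and $\Phi(e_1 e_2) = \Phi(0,0) = 0$; equivalently one may just expand $(ae_+ + be_-)(a'e_+ + b'e_-) = aa'e_+ + bb'e_-$ using these idempotent relations. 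Finally $\Phi(1,1) = e_+ + e_- = 1$, so $\Phi$ carries the unit of $\R^2$ to the unit of $\R[\jj]$; together with bijectivity this makes $\Phi$ an algebra isomorphism.

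For the remaining identities, conjugation of the explicit formula gives $\overline{\Phi(a,b)} = \frac{a+b}{2} - \frac{a-b}{2}\jj = \Phi(b,a)$, and $\Re\,\Phi(a,b) = \frac{a+b}{2}$ reads off directly. Then, using multiplicativity already established, $\Phi(a,b)\,\overline{\Phi(a,b)} = \Phi(a,b)\Phi(b,a) = \Phi(ab,ab)$, whence $Q(\Phi(a,b)) = \Re\,\Phi(ab,ab) = ab$. I do not anticipate any genuine obstacle; the only point requiring a little care is the reduction of multiplicativity to basis vectors via bilinearity (or the equivalent direct expansion with the idempotent relations), together with remembering to confirm that the multiplicative units correspond so that the map is an isomorphism of unital algebras.
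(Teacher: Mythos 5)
Your proposal is correct and follows essentially the same route as the paper: a linear change of coordinates with explicit inverse, multiplicativity checked on the idempotents $\Phi(1,0)$ and $\Phi(0,1)$ via bilinearity, and the identities $\overline{\Phi(a,b)}=\Phi(b,a)$, $\Re\,\Phi(a,b)=\frac{a+b}{2}$, $Q(\Phi(a,b))=ab$ read off directly. The only additions (writing the inverse explicitly and confirming the unit is preserved) are harmless refinements of the same argument.
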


Before proving Theorem \ref{thm:LittleDiamond}, we prove a variant of it with the more natural inversion $\iota_c(x) = \overline x/Q(x)$.
\begin{theorem}\label{thm:zbar} Consider the CF algorithm on $\R[\jj]$ given by the inversion $\iota_c(x) = \overline x/Q(x)$, lattice $\Zee = \Z \frac{1+\jj }{2} \oplus \Z \frac{1-\jj }{2}\subset \R[\jj]$, and fundamental domain given by the little diamond $K=\{x_1+x_2\jj \in \R[\jj] : |x_1-\frac12|+|x_2|\leq \frac12\}$. Then:
\begin{enumerate}[(a)]
    \item A point $x=x_1+x_2\jj \in \R[\jj]$ has an infinite expansion if and only if  $x$ is \emph{diagonally-completely irrational}, i.e.~both $(x_1+x_2)$ and $(x_1-x_2)$ are irrational.
    \item A point $x\in \R[\jj]$ has an eventually-repeating expansion if and only if it is a root of a non-degenerate quadratic equation $a x^2 + bx + c=0$ with coefficients in $\Zee$.
    \item For a completely irrational point $x$, one has $x=1/(a_1 + 1/\ldots)$, in the sense of convergence in the Euclidean topology, and therefore also relative to $Q$.
    \item The generalized Gauss map leaves invariant the probability measure \(\frac{2}{\left(\log 2\right)^2} \frac{dx_1 dx_2}{(1+x_1)^2-x_2^2}\)
    \item The generalized Gauss map has full cylinders and satisfies Renyi's condition, and therefore is exact, mixing of all orders, and ergodic.
\end{enumerate}
\begin{proof}
Let us refer to the above CF algorithm as the $\iota_c$-CFs, with notation $\Zee_{c}$, $\iota_c$, $K_{c}$. Consider also the CF system on $\R^2$ given by the product of regular CFs on each coordinate; we will refer to these as the $\R^2$-CFs, with notation $\Zee_{\R^2}$, $\iota_{\R^2}$, $K_{\R^2}$.

We first claim that the algebra isomorphism $\Phi: \R^2 \rightarrow \R[\jj]$ identifies $\R^2$-CFs with $\iota_c$-CFs. Of course, it identifies the spaces, the topologies, and the measure classes. It is immediate from the definition of $\Zee_{c}$ that $\Phi(\Zee_{\R^2}) = \Zee_{c}$. The inversions in both algebras can be expressed as $1/x$, and thus are identified by $\Phi$: 
\(\iota_{\R^2}(a,b) = (1/a,1/b)=1/(a,b), \hspace{.5in} \iota_c(x) = \overline{x}/Q(x) = \overline x/ (x\overline x) = 1/x.\)
Lastly, $K_c$ is a diamond with vertices $\{0, \frac{1+\jj }{2}, \frac{1-\jj }{2}, 1\} = \Phi(\{(0,0),(1,0), (0,1), (1,1)\})$. Thus, $K_c = \Phi(K_{\R^2})$. This provides an identification of all the ingredients of the CF algorithm enumerated in \S \ref{subsec:CFframework}.

\begin{figure}
\includegraphics[width=.3\textwidth]{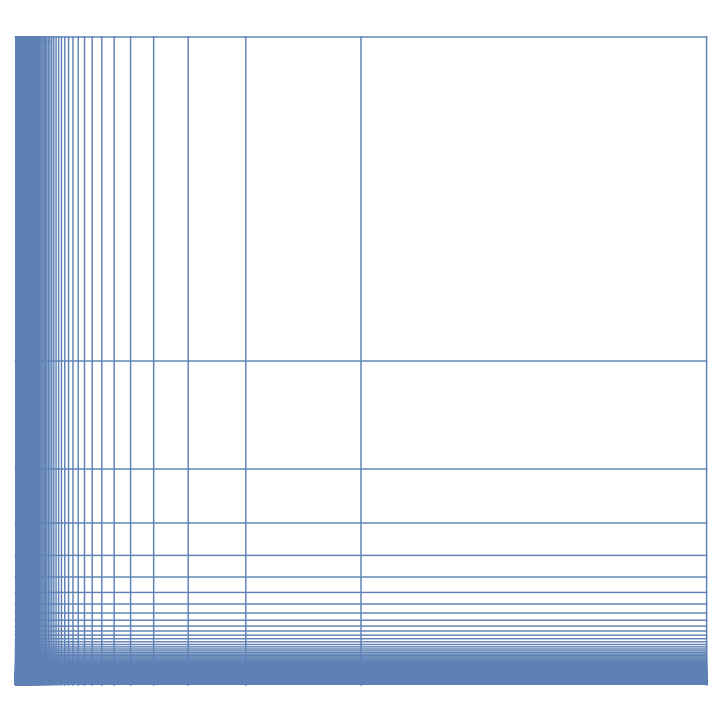}
\hspace{.1\textwidth}
\includegraphics[width=.3\textwidth]{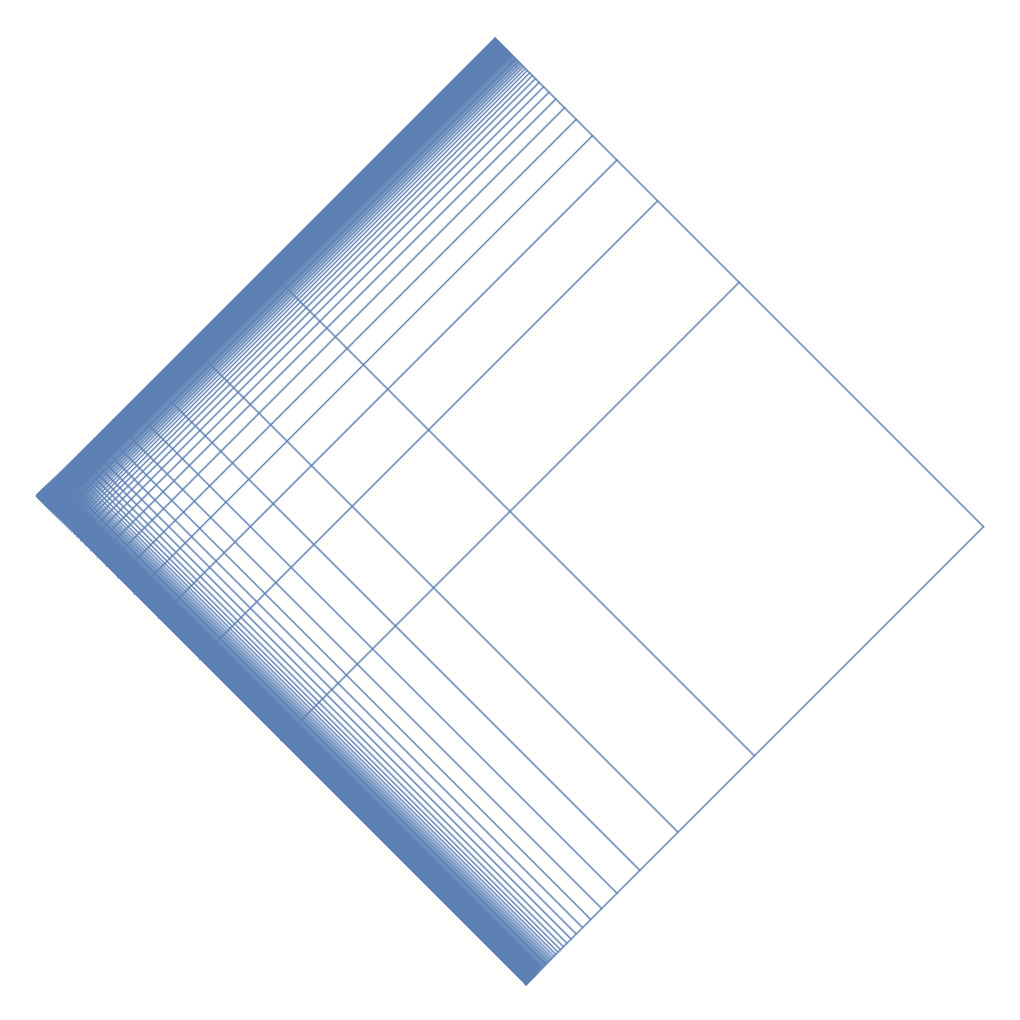}
\caption{Left: depth-1 cylinders in $[0,1)^2$ for the product CFs in $\R^2$. Right:  depth-1 cylinders for the little diamond CFs in $\R[\jj]$.}
\end{figure}

We now claim that the conclusions of the theorem follow from Theorem \ref{thm:MainProductAlgebras}.
\begin{enumerate}[(a)]
\item By Theorem \ref{thm:MainProductAlgebras}(b), $(a,b)\in \R^2$ has an infinite expansion if and only if both $a$ and $b$ are irrational. Thus, $x=x_1+x_2\jj $ has an infinite $\iota_c$-CF expansion if and only if both coordinates of $\Phi^{-1}(x_1+x_2\jj ) = (x_1+x_2, x_1-x_2)$ are irrational.
\item The statement is equivalent to Theorem \ref{thm:MainProductAlgebras}(c), under the identifications provided by $\Phi$.
\item Since $\Phi$ is a homeomorphism, convergence of the convergents in the Euclidean topology is preserved. This, again, gives convergence with respect to $Q$.
\item The invariant measure for $T_{\R^2}$ is given, in differential form notation, by
\begin{align*}
&\frac{1}{(\log 2)^2}\frac{|\dd a\wedge \dd b|}{(1+a)(1+b)}  = \frac{1}{(\log 2)^2}\frac{|\dd(x_1+x_2)\wedge \dd (x_1-x_2)|}{(1+x_1+x_2)(1+x_1-x_2)}\\&= 
 \frac{1}{(\log 2)^2}\frac{|-2 \dd x_1\wedge \dd x_2|}{(1+x_1+x_2)(1+x_1-x_2)}= \frac{2}{(\log 2)^2}\frac{\dd x_1 \dd x_2}{(1+x_1)^2-x_2^2}.
\end{align*}
 \item The fact that the generalized Gauss map has full cylinders follows from the fact that it's conjugate, via $\Phi$, to a product of classical Gauss maps, which have full cylinders. This is enough to conclude that it is exact by Theorem \ref{thm:RokhlinProductSystem}, since the classical Gauss map satisfies the conditions of Rokhlin's Theorem \ref{thm:RokhlinExactnessTheorem}. Additionally, we obtain that the product system still satisfies Renyi's condition, which respects products when written in its differential form: for some $\lambda >0$ one has $\frac{\sup_{x\in A} J_x(T^{n(A)})}{\inf_{x\in A} J_x(T^{n(A)})}< \lambda$, where $J_xT^{n(A)} = \left|\det \dd T^{n(A)}\vert_x\right|$.\qedhere
\end{enumerate}
\end{proof}
\end{theorem}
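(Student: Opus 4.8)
The plan is to prove Theorem~\ref{thm:zbar} by transporting every assertion through the algebra isomorphism $\Phi:\R^2\to\R[\jj]$ of Lemma~\ref{lemma:changeOfCoordinates}, reducing everything to Theorem~\ref{thm:MainProductAlgebras} about product-algebra CFs on $\R^2$. The backbone of the argument is a single conjugacy claim: $\Phi$ carries the $\R^2$-CF algorithm data $(\Zee_{\R^2},\iota_{\R^2},K_{\R^2})$ to the $\iota_c$-CF data $(\Zee_c,\iota_c,K_c)$. I would establish this componentwise against the list in \S\ref{subsec:CFframework}: (i) $\Phi$ is a homeomorphism preserving the Lebesgue measure class (it is linear and invertible); (ii) $\Phi(\Z^2)=\Zee_c$, which is immediate from the definition $\Zee_c=\Z\frac{1+\jj}{2}\oplus\Z\frac{1-\jj}{2}$; (iii) the inversions agree, since in $\R[\jj]$ one has $\iota_c(x)=\overline x/Q(x)=\overline x/(x\overline x)=1/x$, matching the coordinatewise reciprocal $\iota_{\R^2}(a,b)=(1/a,1/b)$ under an algebra isomorphism; and (iv) $\Phi(K_{\R^2})=K_c$, checked by verifying that $\Phi$ sends the four vertices $(0,0),(1,0),(0,1),(1,1)$ of the unit square to the four vertices $0,\frac{1+\jj}{2},\frac{1-\jj}{2},1$ of the little diamond and using linearity (together with $\Phi$ being an affine bijection it then maps the square onto the diamond, fundamental domain to fundamental domain). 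Once this conjugacy is in place, $\Phi$ intertwines the two Gauss maps $T_{\R^2}$ and $T_{\iota_c}$, identifies cylinders with cylinders, and matches ``full'' cylinders with ``full'' cylinders.

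With the conjugacy established, each item (a)--(e) follows mechanically. For (a), a point has an infinite expansion iff $T^n$ never lands in the null set; under $\Phi^{-1}$ the null set of $\R[\jj]$ (where $Q=x_1^2-x_2^2=0$) pulls back to the non-invertible points of $\R^2$, and Theorem~\ref{thm:MainProductAlgebras}(b) says this happens iff both coordinates of $\Phi^{-1}(x_1+x_2\jj)=(x_1+x_2,x_1-x_2)$ are irrational --- precisely diagonal-complete irrationality. For (b), a quadratic equation over $\Zee_c$ in $\R[\jj]$ pulls back under the algebra isomorphism to a quadratic equation over $\Z^2$ in the product algebra, and non-degeneracy is preserved, so Theorem~\ref{thm:MainProductAlgebras}(c) applies verbatim. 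Item (c) is immediate since $\Phi$ is a homeomorphism, so Euclidean convergence of convergents transfers, and convergence relative to $Q$ follows because $Q$ is a continuous function vanishing exactly on $\mathcal N$ (or one simply notes $Q$-convergence to a point with $Q\ne 0$ is implied by Euclidean convergence). Item (d) is the one genuine computation: push the $\R^2$-invariant density $\frac{1}{(\log 2)^2}\frac{da\,db}{(1+a)(1+b)}$ through the change of variables $a=x_1+x_2$, $b=x_1-x_2$, noting the Jacobian factor $|da\wedge db|=2\,|dx_1\wedge dx_2|$ and $(1+a)(1+b)=(1+x_1)^2-x_2^2$, yielding the stated measure $\frac{2}{(\log 2)^2}\frac{dx_1\,dx_2}{(1+x_1)^2-x_2^2}$.

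Item (e) splits into two pieces. Full cylinders for $T_{\iota_c}$ follow from the conjugacy together with the fact that products of classical Gauss maps have full cylinders (the cylinder of a product is a product of cylinders, and $T^{|s|}$ of it is the product of the normalized one-dimensional cylinders, which is $[0,1)^d$). Exactness then follows from Theorem~\ref{thm:RokhlinProductSystem}, since the classical Gauss map satisfies the hypotheses of Rokhlin's Exactness Theorem~\ref{thm:RokhlinExactnessTheorem} (full cylinders generating the Borel $\sigma$-algebra, return time $n(C_s)=|s|$, and Renyi's condition); exactness implies mixing of all orders and hence ergodicity, as recorded in \S\ref{subsec:dynamicalpreliminaries}. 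To see that Renyi's condition itself transfers, I would observe that Renyi's condition, written in the scale-invariant differential form $\sup_{x\in A}J_x(T^{n(A)})/\inf_{x\in A}J_x(T^{n(A)})<\lambda$ with $J_xT^n=|\det dT^n|_x|$, is multiplicative over products: the Jacobian of $T_1\times\cdots\times T_d$ factors as the product of the coordinate Jacobians, so the sup/inf ratio of the product is bounded by the product of the ratios, each of which is bounded by the one-dimensional Renyi constant. This differential form is invariant under the smooth conjugacy $\Phi$ (the extra $|\det d\Phi|$ factors cancel in the ratio), so Renyi's condition holds for $T_{\iota_c}$.

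I expect no serious obstacle: the whole theorem is engineered so that $\Phi$ does the work. The only place demanding care is making the conjugacy claim fully rigorous at the level of the CF-algorithm-data axioms in \S\ref{subsec:CFframework} --- in particular verifying that $\Phi$ really maps the square fundamental domain onto the diamond fundamental domain (not merely vertices to vertices but the convex region to the convex region, which follows from affineness) and that the rounding functions $[\cdot]_{K_{\R^2}}$ and $[\cdot]_{K_c}$ are intertwined, which is forced once $\Phi(\Zee_{\R^2})=\Zee_c$ and $\Phi(K_{\R^2})=K_c$. After that, the proof is a sequence of one-line transfers plus the single Jacobian computation in (d).
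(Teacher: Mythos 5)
Your proposal is correct and follows essentially the same route as the paper: conjugating to the product of regular CFs via $\Phi$, transferring (a)--(c) directly, doing the change-of-variables computation for (d), and obtaining (e) from full cylinders plus the product/conjugacy-invariant differential form of Renyi's condition together with Theorem \ref{thm:RokhlinProductSystem}. No gaps to flag.
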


We can now prove Theorem \ref{thm:LittleDiamond} as a corollary of Theorem \ref{thm:zbar}.

\begin{proof}[Proof of Theorem \ref{thm:LittleDiamond}]
Let us refer to the Little Diamond CF of Theorem \ref{thm:LittleDiamond} as the $\iota_+$-CF. The $\iota_+$-CF uses the inversion $\iota_+(x)=x/Q(x)$ but otherwise has identical data to the $\iota_c$-CF of Theorem \ref{thm:zbar}, which uses the inversion $\iota_c(x)=\overline x/Q(x)$.  We therefore recover all the desired conditions from Theorem \ref{thm:zbar}:
\begin{enumerate}
\item If one of the expansions is given by the sequence $(a_1, a_2, a_3, \ldots)\subset \Zee$, then the other expansion for the same point is given by $(\overline{a_1}, a_2, \overline{a_3}, \ldots)$. Thus, one sequence is infinite if and only if the other is, and by Theorem \ref{thm:zbar} is equivalent to $x$ being diagonally-completely irrational.
\item The digit sequence $(a_1, a_2, a_3, \ldots)$ is eventually-periodic if and only if the same holds for $(\overline{a_1}, a_2, \overline{a_3}, \ldots)$ (perhaps with a doubling of the period). By Theorem \ref{thm:zbar}, this is equivalent to $x_1+x_2\jj $ being a solution to a non-degenerate quadratic equation with coefficients in $\Zee$.
\item The convergents are identical for the two CF systems, so convergence for $\iota_+$-CFs follows from that of $\iota_c$-CFs.
\item Denote the generalized Gauss maps by $T_+$ and $T_c$, and conjugation by $c(x)=\overline x$. Then $c$ commutes with both $T_+$ and $T_c$ and $T_+ =c \circ T_c$. The invariant measure $\mu$ for $T_c$ given in Theorem \ref{thm:zbar} is $c$-invariant, and is therefore also an invariant measure for $T_+$:
\(\mu(T^{-1}_+(A)) = \mu(c\circ T^{-1}_c A) =  \mu(T^{-1}_c A)=  \mu(A).\)
\item Because of symmetry, the cylinders for $\iota_+$-CFs agree with cylinders for $\iota_c$-CFs, and we obtain Renyi's condition because $J_x T_{+}^{n(A)}=J_x T_{c}^{n(A)}$, since $c$ preserves Lebesgue measure. We thus obtain exactness of the $\iota_+$-CFs from Theorem \ref{thm:RokhlinExactnessTheorem}. Alternately, exactness follows from Theorem \ref{thm:zbar} by using the fact that $T^2_+=T^2_c$ so that for any set $A$ with $\mu(A)>0$ we have $\mu(T_+^n(A))\rightarrow 1$ if and only if $\mu(T_c^n(A))\rightarrow 1$.\qedhere
\end{enumerate}
\end{proof}

Analogously to $\alpha$-CFs in $\R$ and $\R^d$, one can define $\alpha$-CFs in $\R[\jj]$. These are well-behaved for the inversion $\iota(z)=1/z$:

\begin{theorem}
Fix $\alpha=\alpha_1+\alpha_2\jj \in \R[\jj]$ with $|\alpha_1|+|\alpha_2|<1$. Consider the CF algorithm on $\R^{1,1}$ given by the inversion $\iota(x) = \overline x/Q(x)$, lattice $\Zee = \Z \frac{1+\jj}{2} \oplus \Z \frac{1-\jj}{2}\subset \R[\jj]$, and fundamental domain given by the $\alpha$-shifted little diamond $K=-\alpha+\{x+\jj y\in \R[\jj] : |x_1-\frac12|+|x_2|\leq \frac12\}$. The resulting $CF$ algorithm is convergent and exact, with a finite invariant measure equivalent to Lebesgue measure.
\begin{proof}
Lemma \ref{lemma:changeOfCoordinates} transforms the above CF into a rectangular CF in $\R^2$, and the result follows from Theorem \ref{thm:rectangles}.
\end{proof}
\end{theorem}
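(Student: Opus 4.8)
The plan is to reprise the change-of-coordinates argument behind Theorem \ref{thm:zbar} in the shifted setting, invoking Theorem \ref{thm:rectangles} in place of Theorem \ref{thm:MainProductAlgebras}. Concretely, I would use the algebra isomorphism $\Phi:\R^2\to\R[\jj]$ of Lemma \ref{lemma:changeOfCoordinates} to identify the $\alpha$-shifted little-diamond CF with a product of one-dimensional shifted (Tanaka--Ito type) CFs --- equivalently, with a rectangular CF on the product algebra $\R^2$ --- and then carry every conclusion back through $\Phi$.

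First I would recall, exactly as in the proof of Theorem \ref{thm:zbar}, that $\Phi$ identifies the standard topology and the Lebesgue measure class (it is linear, with $\Phi^{-1}(x_1,x_2)=(x_1+x_2,x_1-x_2)$, so its Jacobian is a nonzero constant), that it carries $\Z^2$ onto $\Zee=\Z\tfrac{1+\jj}{2}\oplus\Z\tfrac{1-\jj}{2}$, and that it intertwines the coordinatewise inversion $(a,b)\mapsto(1/a,1/b)$ on $\R^2$ with $\iota(x)=\overline x/Q(x)=1/x$ on $\R[\jj]$. Next I would match up the fundamental domains: since $\Phi$ carries the vertices $(0,0),(1,0),(0,1),(1,1)$ of the unit square to the vertices $0,\tfrac{1+\jj}{2},\tfrac{1-\jj}{2},1$ of the little diamond $K_0=\{x_1+x_2\jj:|x_1-\tfrac12|+|x_2|\le\tfrac12\}$, linearity gives $\Phi([0,1)^2)=K_0$ up to a Lebesgue-null set, whence $\Phi([0,1)^2-\vec\alpha')=K_0-\alpha=K$ with $\vec\alpha':=\Phi^{-1}(\alpha)=(\alpha_1+\alpha_2,\alpha_1-\alpha_2)$. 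Thus $\Phi$ matches all the CF-algorithm data of \S\ref{subsec:CFframework} and conjugates the $\alpha$-shifted little-diamond Gauss map to the rectangular Gauss map $T_{\vec\alpha'}$ on $[0,1)^2-\vec\alpha'$.

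Then I would note that $\vec\alpha'$ is admissible for Theorem \ref{thm:rectangles}: since $K$ must contain $0$ (part of the CF-algorithm data), we have $\alpha\in K_0$, equivalently $\alpha_1+\alpha_2\in[0,1]$ and $\alpha_1-\alpha_2\in[0,1]$, i.e.\ $\vec\alpha'\in[0,1]^2$. Theorem \ref{thm:rectangles} then gives that $T_{\vec\alpha'}$ is exact with respect to a finite invariant measure equivalent to Lebesgue measure, and that the completely irrational points of $[0,1)^2-\vec\alpha'$ have infinite convergent expansions. Pulling back along $\Phi$: being a homeomorphism, $\Phi$ preserves Euclidean convergence of the convergents (hence also convergence relative to $Q$) and sends completely irrational points of $\R^2$ to diagonally-completely irrational points of $\R[\jj]$; being a measure-theoretic isomorphism that conjugates the two Gauss maps, $\Phi$ transfers exactness; and the pushforward of the finite invariant measure is a finite $T$-invariant measure on $K$, still equivalent to Lebesgue measure because $\Phi$ is affine with nonzero constant Jacobian.

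I do not expect a serious obstacle --- essentially all of the work was already done in the proof of Theorem \ref{thm:zbar}, where the same $\Phi$, lattice and inversion appear, and the only new feature is the translation of the fundamental domain, which commutes with the linear map $\Phi$. The one point that genuinely needs attention is the admissibility step above: a priori one might worry that $\Phi^{-1}(K)$ is only \emph{some} $\Z^2$-translate of a domain covered by Theorem \ref{thm:rectangles}, and lattice-translating a fundamental domain need not preserve convergence or exactness (for instance the map $x\mapsto 1/x-[1/x]$ on $[1,2)$ collapses onto an attracting fixed point and admits no Lebesgue-equivalent invariant measure). The condition $0\in K$ rules this out by pinning $\vec\alpha'$ to $[0,1]^2$, and with that the remaining checks are a line-by-line transcription of the proof of Theorem \ref{thm:zbar}.
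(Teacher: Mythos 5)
Your proposal is correct and is essentially the paper's own proof: the paper simply cites Lemma \ref{lemma:changeOfCoordinates} to turn the shifted little-diamond system into a rectangular CF on the product algebra $\R^2$ and then invokes Theorem \ref{thm:rectangles}; your write-up is a careful expansion of exactly that reduction.

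One remark is worth making, because your ``admissibility'' step addresses a point the paper's two-line proof silently skips. The stated hypothesis $|\alpha_1|+|\alpha_2|<1$ only gives $\vec\alpha'=(\alpha_1+\alpha_2,\alpha_1-\alpha_2)\in(-1,1)^2$, whereas Theorem \ref{thm:rectangles} (equivalently, the Tanaka--Ito/Nakada--Steiner input) requires $\vec\alpha'\in[0,1]^2$, i.e.\ $\alpha$ in the \emph{little} diamond $K_0$, not merely the big diamond centered at $0$. For $\alpha$ in the big diamond but outside $K_0$ the transformed domain has a coordinate interval of the form $[c,1+c)$ with $c>0$, which contains the point $1$ as an indifferent fixed point with digit $0$, so the conclusion (finite Lebesgue-equivalent invariant measure, exactness) is not delivered by this route and is likely false there. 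Your fix --- using the framework requirement $0\in K$ from \S\ref{subsec:CFframework} to pin $\alpha\in K_0$ and hence $\vec\alpha'\in[0,1]^2$ --- is the right repair, and effectively says the hypothesis should read $\alpha\in K_0$ rather than $|\alpha_1|+|\alpha_2|<1$; with that reading your argument goes through exactly as the paper intends.
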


\section{Experimental evidence and open questions}\label{sec:OpenQuestions}
\subsection{Two-dimensional systems}
\label{subsec:2DSystems}
While we are able to obtain results for systems that are closely related to products of known CFs, there remain natural CF algorithms in $\R^{1,1}$ that are not as easily analyzed. We present some examples here.
\subsubsection{Square CFs} 
\label{subsubsec:SquareCF}
Following the example of Hurwitz complex CFs \cite{MR2351741}, one can define a CFs in $\R^{1,1}$ with inversion $\iota(x)=x/Q(x)$, digits in $\Zee=\Z^2$ and fundamental domain $K=[-1/2,1/2]\times [-1/2,1/2]$, which is contained in the Euclidean-expanding region. Let us refer to these CFs as the square CFs. Experimental evidence suggests that this CF algorithm is convergent with respect to both $Q$ and the Euclidean metric, and ergodic with respect to a probability measure equivalent to Lebesgue measure, see Figure \ref{fig:squareCF}. However, it is unclear how to study this dynamical system. Standard convergence arguments fail (crucially, $Q(x)=0$ does not imply $x=0$). Additionally, the system does not satisfy either the full-cylinder or the finite range condition: because $\iota(\partial K)$ is a pair of hyperbolas with slope arbitrarily close to 1, the collection $\{T(C_{\{(n,n+1)\}}):n\in \N_+\}$ is infinite.

\begin{figure}[ht]
\includegraphics[width=.45\textwidth]{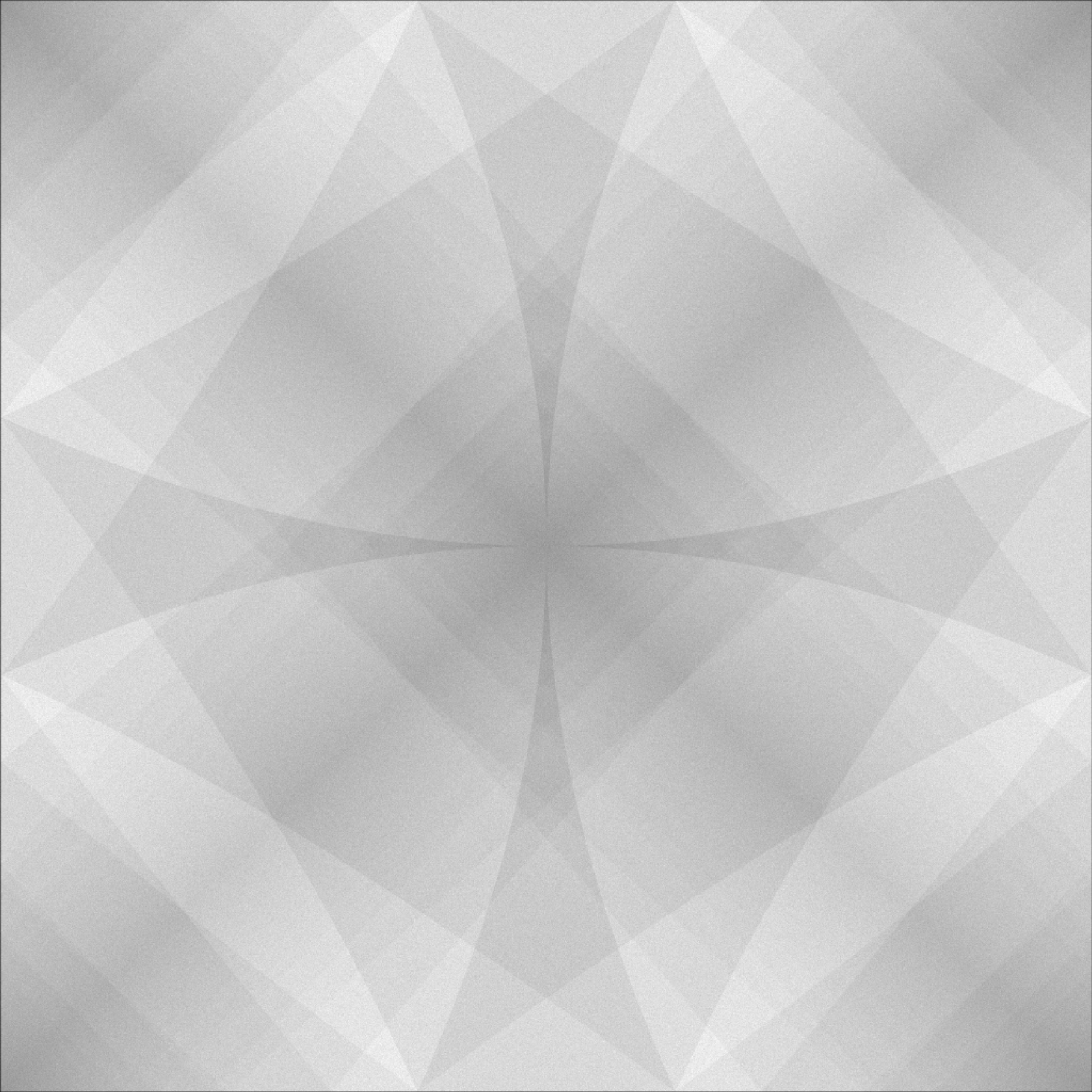}
\includegraphics[width=.45\textwidth]{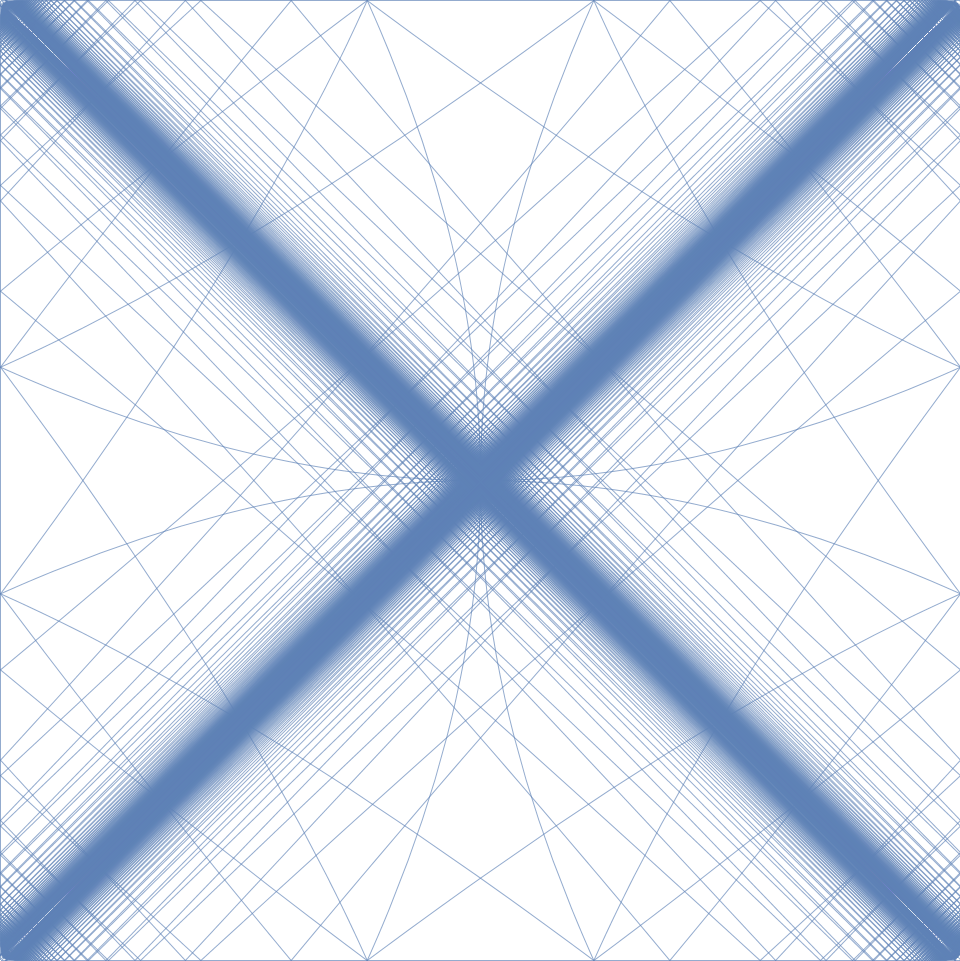}
\caption{Left: Experimental evidence suggests that square CFs in $\R^{1,1}$ have a finite invariant measure. Right: Image of the boundary of $[-1/2,1/2]^2$ under the generalized Gauss map.}
\label{fig:squareCF}
\end{figure}

\subsubsection{Big-diamond CFs}
\label{subsubsec:BigDiamondCF}
In view of Lemma \ref{lemma:EuclideanExpandingRegion}, one can consider a CF in $\R^{1,1}$ with inversion $\iota(x)=\overline x/Q(x)$, fundamental domain the big diamond $K=\{(x_1,x_2): |x_1|+|x_2|\leq 1\}$ and digits in the lattice $\{a\frac{(1,1)}{2}+b\frac{(1,-1)}{2} : a,b \in 2\Z\}$. In view of Lemma \ref{lemma:changeOfCoordinates}, these are exactly a product of two copies a CFs in $\R$ with inversion $\iota(x)=1/x$, lattice $2\Z$, and fundamental domain $[-1,1]$. The system is convergent (see e.g.~\cite{lukyanenko2022convergence} for a quick argument). A closely related folded variant with $\iota(x)=|1/x|$ was studied by Schweiger \cite{schweiger1982continued}, who showed that it was ergodic with an infinite invariant measure, but did not satisfy Renyi's condition due to an indifferent fixed point. It is not clear whether the one-dimensional even CFs are exact (in a sense appropriate for infinite measures). Likewise, it is unclear whether the product equivalent to big-diamond CFs is ergodic.

\subsection{Three-dimensional system}
\label{subsec:3DSystem}
For $2+1$-dimensional Lorentzian continued fractions, 
we observe that the quadratic form $-\det$ on the vector space of real $2\times 2$ symmetric matrices $\mathrm{Sym}_2(\bR)$ has signature $(2,1)$, making it into a Lorentzian vector space. The basis
\[E_1=\begin{pmatrix} 1 & 0\\0&-1\end{pmatrix}, E_2=\begin{pmatrix} 0 & 1\\1 & 0\end{pmatrix}, E_3=\begin{pmatrix} 1 & 0\\0 & 1\end{pmatrix}\]
is orthogonal for the associated bilinear form, so the map
\[(x_1,x_2,x_3) \mapsto \begin{pmatrix}
x_1+x_3 & x_2\\x_2 & -x_1+x_3\end{pmatrix}\]
is an isomorphism of Lorentzian vector spaces, with inverse
\[\begin{pmatrix}
a & b\\b & c\end{pmatrix} \mapsto \left(\frac{a-c}{2}, b, \frac{a+c}{2}\right).\]

\begin{figure}[b]
\includegraphics[width=.5\textwidth]{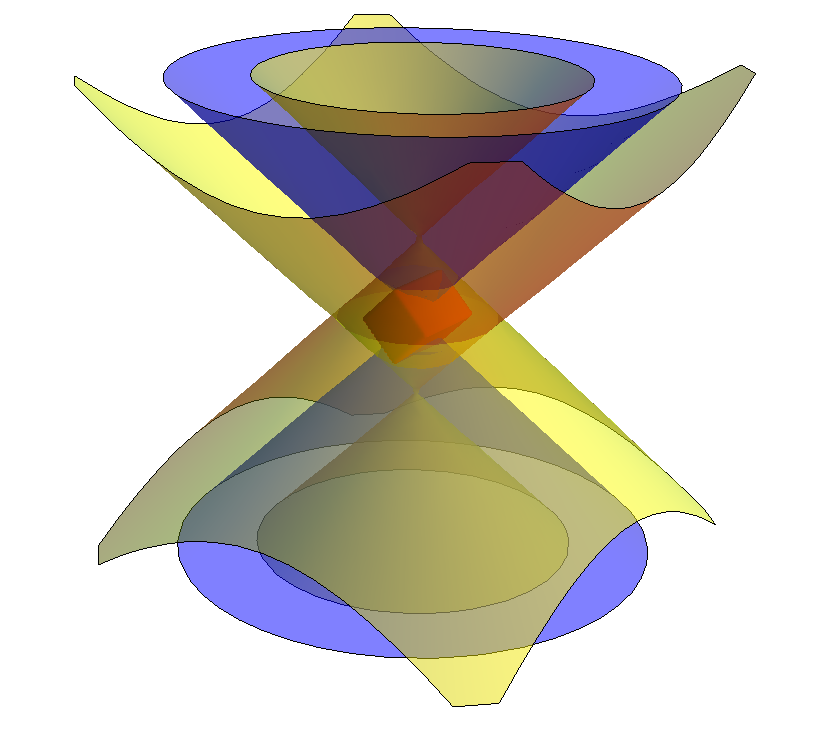}
\caption{The fundamental domain in $\R^{2,1}$ of the Lorentzian continued fractions is shown in red. The light cone $\mathcal N$ at the origin is shown in blue, while the light cones at $(0,0,\pm 1)$ are shown in yellow. The region between these two light cones is the Euclidean expanding region for the inversion.}
\label{fig:3D}
\end{figure}

In this basis, the Lorentzian inversion satisfies $\iota(x_1,x_2,x_3)=\frac{(x_1,x_2,-x_3)}{x_1^2 + x_2^2 -x_3^2}$:
\[\iota\begin{pmatrix}
x_1+x_3 & x_2\\x_2 & -x_1+x_3\end{pmatrix} = \frac{1}{x_1^2+x_2^2-x_3^2}\begin{pmatrix}
x_1-x_3 & x_2\\x_2 & -x_1-x_3\end{pmatrix},\]
and so by defining $\overline{x_1 E_1 + x_2 E_2 + x_3 E_3} = x_1 E_1 + x_2 E_2 - x_3 E_3$ we can write $\iota(M) = \overline{M}^{-1}.$ We use $\mathcal{Z} = \Sym^2(\bZ) \subset \Sym^2(\bR)$ as our lattice of translations, and

\[K=\left\{\left.\begin{pmatrix}a & b\\b & c \end{pmatrix} ~\right|~ a,b,c \in [-1/2,1/2]\right\}\]
as our fundamental domain.

In coordinates $(x_1,x_2,x_3)$ as above, this lattice is generated by $(1/2,0,1/2)$, $(0,1,0)$, and $(-1/2,0,1/2)$. The fundamental domain is defined by the inequalities $|a-c|\le 1$, $|a+c|\le 1$, and $|b|\le \frac12$. This region is in the expanding region for $\iota$ given by Lemma \ref{lemma:EuclideanExpandingRegion}, as shown in Figure \ref{fig:3D}.

Experimental evidence suggests this CF is convergent and ergodic with respect to a finite invariant measure.

\bibliographystyle{abbrv}  
\bibliography{Bibliography}
\end{document}